\theoremstyle{plain}
\theoremstyle{definition}
\newtheorem{theorem}{Theorem}[section]
\newtheorem{lemma}[theorem]{Lemma}
\newtheorem{counterexample}[theorem]{Counterexample}
\theoremstyle{remark}
\numberwithin{equation}{section}
\title{Possible heights of graph transformation groups}
\author[F. Ayatollah Zadeh Shirazi, A. Hosseini, Z. Nili Ahmadabadi]{Fatemah Ayatollah Zadeh Shirazi, \\ Arezoo Hosseini, Zahra Nili Ahmadabadi}
\begin{document}
\begin{abstract}
In the following text we prove that for all finite $p\geq0$ there exists a topological graph $X$
such that $\{p,p+1,p+2,\ldots\}\cup\{+\infty\}$ is the collection of all possible heights
for transformation groups with phase space $X$. Moreover for all topological graph $X$ with
$p$ as height of transformation group $(Homeo(X),X)$,
$\{p,p+1,p+2,\ldots\}\cup\{+\infty\}$ again is the collection of all possible heights
for transformation groups with phase space $X$.
\end{abstract}
\maketitle
\noindent {\small {\bf 2010 Mathematics Subject Classification:}  54H20 \\
{\bf Keywords:}} Dynamical system, Height, Topological graph, Transformation group.
\section{Preliminaries}
\noindent By a transformation group  $(G,X,\rho)$ or simply $(G,X)$ we mean
a compact Hausdorff topological space $X$, discrete topological group $G$ with identity $e$,
and continuous map $\mathop{\rho:G\times X\to X}\limits_{\:\:\:\:\:\:\:\:\:\:\:(g,x)\mapsto gx}$ such that for all
$x\in X$ and $s,t\in G$ we have $ex=x$ and $s(tx)=(st)x$. In transformation group
$(G,X)$ we say the nonempty subset $Y$ of $X$ is invariant if $GY:=\{gy:g\in G,y\in Y\}\subseteq Y$~\cite{ellis, ellis2}.
For closed invariant subset $Y$ of $(G,X)$ we denote height of $Y$ (or $(G,Y)$)
by $h(G,Y)$, where $h(G,Y):=\sup\{n\geq0:$ there exist
distinct closed invariant subsets $Y_0\subset Y_1\subset\cdots\subset Y_n=Y$ of $(G,X)\}$
which has been introduced for the first time in~\cite{trans} (for more details see~\cite{ind}).
\\
By a dynamical system $(X,f)$ we mean a topological space $X$ and homeomorphism $f:X\to X$.
There exists a one to one correspondence between the collection of all dynamical systems $(X,f)$
with phase space $X$ and transformation groups $({\mathbb Z},X,\rho)$ (with $nx(=\rho(n,x))=f^n(x)$); more
simply one may consider transformation group $(\{f^n:n\in{\mathbb Z}\},X)$ instead of dynamical system
$(X,f)$.
\\
For topological space $A$ suppose $Homeo(A)$ denotes the collection of all homeomorphisms $f:A\to A$ and $P_h(A)=\{h(G,A):(G,A)$ is a transformation group$\}(=\{h(G,A):G$ is a subgroup of $Homeo(A)$ under the operation of composition of maps$\}$). For homeomorphism $f:A\to A$ let $<f>:=\{f^n:n\in{\mathbb Z}\}$.
\\
By a topological graph we mean a compact connected metric space $X$ which is finite union of arcs such that
every two arcs intersect each other in at most their end points (where each topological space homeomorph
with [0,1] is an arc and points in correspondence with 0,1 are end points) \cite{nadler} (moreover one may see \cite{111, 222}
as examples of papers on dynamical and topological properties of topological graphs).
\section{Unit interval}
\noindent In this section we prove that the collection of all possible heights for unit interval transformation group $(G,[0,1])$ is $\{1,2,3,\ldots\}\cup\{+\infty\}$.
\begin{lemma}\label{salam10}
In unit interval transformation group $(G,[0,1])$ we have
$h(G, [0,1])\geq1$ moreover $h(Homeo([0,1]),[0,1])=1$.
\end{lemma}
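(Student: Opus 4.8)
The plan is to treat the two assertions separately, and in both the crucial structural fact is that every homeomorphism of $[0,1]$ permutes the two endpoints. First I would record this fact: for any $f\in Homeo([0,1])$ one has $f(\{0,1\})=\{0,1\}$. The cleanest justification is purely topological—a point $p$ is a cut point of $[0,1]$ (i.e. $[0,1]\setminus\{p\}$ is disconnected) exactly when $p\in(0,1)$, so the set $\{0,1\}$ of non-cut points is a topological invariant, hence preserved by every self-homeomorphism.

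For the first claim $h(G,[0,1])\geq1$, I would simply exhibit one proper nonempty closed invariant set. Recalling from the preliminaries that a transformation group on $[0,1]$ amounts to a subgroup $G$ of $Homeo([0,1])$, the endpoint-preservation fact gives $G\{0,1\}=\{0,1\}$, so $\{0,1\}$ is closed and invariant, and $\{0,1\}\subset[0,1]$ is a strict inclusion. This chain of length $1$ already witnesses $h(G,[0,1])\geq1$, regardless of which subgroup $G$ is.

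For the equality $h(Homeo([0,1]),[0,1])=1$ the lower bound is the special case $G=Homeo([0,1])$ of the first claim, so the real work is the upper bound $\leq1$. The key step, which I expect to be the main obstacle, is to classify the nonempty closed invariant sets of the full homeomorphism group and show they are exactly $\{0,1\}$ and $[0,1]$. I would do this via orbits: the orbit of $0$ is $\{0,1\}$ (use $x\mapsto 1-x$ to swap the endpoints), while the orbit of any interior point is all of $(0,1)$, since for any $x,y\in(0,1)$ the piecewise-linear homeomorphism fixing $0,1$ and sending $x$ to $y$ does the job. Consequently a nonempty closed invariant set $Y$ either misses $(0,1)$ entirely—forcing $Y=\{0,1\}$—or contains some interior point, hence all of $(0,1)$, and being closed contains $\overline{(0,1)}=[0,1]$, forcing $Y=[0,1]$.

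With the classification in hand, the longest chain of distinct closed invariant sets ending at $[0,1]$ is $\{0,1\}\subset[0,1]$, of length $1$; no chain of length $2$ can exist, so $h(Homeo([0,1]),[0,1])=1$. The only point I would write with some care is the closure argument in the interior case—that homogeneity forces an invariant set meeting $(0,1)$ to contain the dense subset $(0,1)$ and therefore its closure $[0,1]$—since everything else reduces to elementary orbit computations.
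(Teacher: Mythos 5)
Your proposal is correct and follows essentially the same route as the paper: endpoint preservation gives the invariant set $\{0,1\}$ for the lower bound, and the orbit computation (interior points have orbit $(0,1)$, endpoints have orbit $\{0,1\}$) classifies the closed invariant sets for the upper bound. The extra justifications you supply (the cut-point argument and the explicit piecewise-linear homeomorphisms) are fine but not different in substance from what the paper does.
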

\begin{proof}
For each homeomorphism $f:[0,1]\to[0,1]$ we have $f\{0,1\}=\{0,1\}$, thus in transformatin group $(G,[0,1])$ we have $G\{0,1\}=\{0,1\}$ and $\{0,1\}$ is a closed proper invariant subset of $[0,1]$, hence $h(G, [0,1])\geq1$. We have:
\[Homeo([0,1])x=\left\{\begin{array}{lc}(0,1) & x\in(0,1)\:, \\ \{0,1\} & x=0,1 \:. \end{array}\right.\]
So $\{0,1\}\subset[0,1]$ is the only chain of closed invariant subsets of $(Homeo([0,1]),[0,1])$ and $h(Homeo([0,1]),[0,1])=1$.
\end{proof}
\begin{lemma}\label{salam20}
For Hausdorff topological space $A$ we have $+\infty\in P_h(A)$ if and only if $A$ is infinite.
\end{lemma}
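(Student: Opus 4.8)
The plan is to prove the two implications separately, disposing of the finiteness bound first and then constructing an explicit transformation group that realizes height $+\infty$ when $A$ is infinite.

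For the forward direction (if $+\infty\in P_h(A)$ then $A$ is infinite) I would argue by contraposition. Suppose $A$ is finite; being finite and Hausdorff it is discrete, and in particular every subset is closed. Any chain of distinct closed invariant subsets $Y_0\subset Y_1\subset\cdots\subset Y_n=A$ is strictly nested and, since invariant subsets are by definition nonempty, satisfies $1\le|Y_0|<|Y_1|<\cdots<|Y_n|=|A|$. This forces $n\le|A|-1$, so for every transformation group $(G,A)$ we have $h(G,A)\le|A|-1<+\infty$; hence $+\infty\notin P_h(A)$.

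For the reverse direction, assume $A$ is infinite and take $G$ to be the trivial group $\{e\}$, whose action fixes every point of $A$. Then every nonempty subset of $A$ is invariant, so the closed invariant subsets of $(\{e\},A)$ are exactly the nonempty closed subsets of $A$. Because $A$ is Hausdorff, every finite subset is closed; and because $A$ is infinite, for each $n$ we may choose distinct points $x_1,\dots,x_n\in A$ and form the chain $\{x_1\}\subset\{x_1,x_2\}\subset\cdots\subset\{x_1,\dots,x_n\}\subset A$ of $n+1$ distinct closed invariant sets, the final strict inclusion being valid since $A\neq\{x_1,\dots,x_n\}$. Thus $h(\{e\},A)\ge n$ for every $n$, so $h(\{e\},A)=+\infty$ and $+\infty\in P_h(A)$.

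I do not expect a serious obstacle, as both steps are elementary; the only points requiring care are that invariant subsets are required to be nonempty (which is exactly why the counting bound in the finite case starts from $1$) and that the trivial group renders every closed set invariant, reducing the infinite direction to the basic observation that an infinite Hausdorff space admits arbitrarily long strictly increasing chains of nonempty closed subsets.
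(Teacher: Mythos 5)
Your proof is correct and follows essentially the same route as the paper's: the paper also uses the trivial action (the group $\{id_A\}$) together with finite subsets of the Hausdorff space to get arbitrarily long chains when $A$ is infinite, and a counting argument (there, via the finitely many orbit closures) to exclude $+\infty$ when $A$ is finite. Your version is, if anything, slightly more explicit about why singleton closed invariant sets actually assemble into arbitrarily long chains.
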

\begin{proof}
If $A$ is infinite, let $G=\{id_A\}$, then $h(G,A)=+\infty$
(since for each $x\in A$, $\{x\}$ is a closed invariant subset of $(G,A)$). Now suppose there exists $K$ with $h(K,A)=+\infty$, thus $\{\overline{Kx}:x\in A\}$
is infinite, in particular the collection of subsets of $A$ is infinite and $A$ is infinite.
\end{proof}
\begin{theorem}\label{salam30}
$P_h([0,1])=\{1,2,3,\ldots\}\cup\{+\infty\}$.
\end{theorem}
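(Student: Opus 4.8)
The plan is to show the two inclusions separately. First I would establish that $P_h([0,1])\subseteq\{1,2,3,\ldots\}\cup\{+\infty\}$. Since $[0,1]$ is infinite, Lemma~\ref{salam20} already gives $+\infty\in P_h([0,1])$, and for any transformation group $(G,[0,1])$ Lemma~\ref{salam10} gives $h(G,[0,1])\geq1$; thus $0$ never occurs and every finite height is at least $1$, which establishes one inclusion immediately. The substance of the theorem is therefore the reverse inclusion: for each integer $n\geq1$ I must exhibit a subgroup $G_n\leq Homeo([0,1])$ with $h(G_n,[0,1])=n$.

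The key idea for realizing an arbitrary finite height $n$ is to pick a finite subgroup (or a finitely generated abelian subgroup) of $Homeo([0,1])$ whose orbit structure produces a maximal chain of closed invariant sets of exactly length $n$. The cleanest construction I would try is to fix $n-1$ interior points $0<a_1<a_2<\cdots<a_{n-1}<1$ and let $G_n$ be generated by homeomorphisms that fix each $a_i$ and act on each complementary subinterval in a minimal-like fashion (for instance a single homeomorphism with exactly the fixed-point set $\{0,a_1,\ldots,a_{n-1},1\}$, chosen so that every other orbit closure is one of the closed subintervals $[0,a_1],[a_1,a_2],\ldots,[a_{n-1},1]$ or their endpoints). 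One would then argue that the closed invariant sets are precisely the finite unions of these endpoints together with full subintervals, and that a longest strictly increasing chain ending at $[0,1]$ has exactly $n$ steps: one typically builds the chain $\{0,1\}\subset\{0,a_1,1\}\cup\cdots$, adding one "layer" at a time until reaching $[0,1]$, giving height $n$.

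The main obstacle, and the step demanding the most care, is the upper bound within the construction: proving that no chain of closed invariant subsets of $(G_n,[0,1])$ can be strictly longer than $n$. This requires a clean description of all closed invariant sets, which in turn rests on understanding every orbit closure $\overline{G_n x}$. I would argue that because each generator is increasing and fixes exactly the chosen points, the only closed invariant sets are built from the fixed points and the closures of the finitely many "bands" between consecutive fixed points; counting the number of independent choices then caps the chain length at $n$. Getting the homeomorphism's dynamics on each subinterval to force orbit closures to fill the whole subinterval (so that no finer invariant structure sneaks in) is the delicate point, and I would handle it by choosing generators with no interior fixed points on each band, so that the closure of any interior orbit is the entire closed band.

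Once both inclusions are in hand, the theorem follows by combining them with Lemmas~\ref{salam10} and~\ref{salam20}. I would close by remarking that the explicit groups $G_n$ also serve as the base case for the more general graph results promised in the abstract.
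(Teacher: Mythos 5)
Your first inclusion (via Lemmas~\ref{salam10} and~\ref{salam20}) matches the paper, but the construction you propose for realizing a given finite height $n$ has two genuine gaps. The first is fatal as stated: you suggest that a single homeomorphism $f$ with fixed-point set exactly $\{0,a_1,\ldots,a_{n-1},1\}$, having ``no interior fixed points on each band,'' forces the closure of every interior orbit to be the entire closed band. It does not. For $x$ in an open band, the orbit $\{f^k(x):k\in\mathbb{Z}\}$ is a discrete bi-infinite monotone sequence whose closure is that countable set together with the two endpoints of the band; points of the band lying strictly between $x$ and $f(x)$ have orbit closures different from that of $x$, so there are infinitely many distinct closed invariant sets and $h(\langle f\rangle,[0,1])=+\infty$. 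Indeed, the theorem immediately following Theorem~\ref{salam30} in the paper proves exactly this for \emph{every} $f\in Homeo([0,1])$, so no cyclic group can realize a finite height. The paper sidesteps this by taking $G$ to be essentially the full group of homeomorphisms respecting the chosen configuration of points; such a group acts transitively on each open band, so the orbit closure of an interior point really is the whole closed band.

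The second gap is the count. If your group fixes each of $0,a_1,\ldots,a_{n-1},1$ individually, the orbit closures are the $n+1$ singletons and the $n$ closed bands, i.e.\ $2n+1$ sets in all, and a maximal chain (obtained by adjoining one orbit closure at a time, starting from a single fixed point rather than from $\{0,1\}$) has length $2n$, not $n$; your proposed chain $\{0,1\}\subset\{0,a_1,1\}\subset\cdots$ is far from maximal. In particular this construction would realize only even heights. The paper obtains every $n\geq1$ by a different device: it chooses $0=x_0<\cdots<x_n=1$ and generates $G$ by the homeomorphisms carrying $x_i$ to $x_{n-i}$, so that the resulting group contains the full stabilizer of the points and the flip pairs $x_i\leftrightarrow x_{n-i}$ and $(x_{i-1},x_i)\leftrightarrow(x_{n-i},x_{n-i+1})$; this collapses the orbit-closure count to exactly $n+1$ and gives height exactly $n$. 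You would need some such symmetry, or another mechanism for merging orbits, to reach the odd values.
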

\begin{proof}
By Lemmas~\ref{salam10} and \ref{salam20} we have $+\infty\in P_h([0,1])\subseteq\{1,2,3,\ldots\}\cup\{+\infty\}$. For $n\geq1$ choose $0=x_0<x_1<\cdots<x_n=1$ and let $K=\{f\in Homeo([0,1]):\forall i\in\{0,\ldots,n\}\:f(x_i)=x_{n-i}\}$, then $G=\{f_1\cdots f_s:s\geq1,f_1,\ldots,f_s\in K\}$ is a subgroup of $Homeo([0,1])$ (since $K=K^{-1}$) such that
$\{f\in Homeo([0,1]):\forall i\in\{0,\ldots,n\}\:f(x_i)=x_i\}\subseteq G$
(since $K\neq\varnothing$ and for all $k\in K$, $h\in \{f\in Homeo([0,1]):\forall i\in\{0,\ldots,n\}\:f(x_i)=x_i\}$ we have $hk,k^{-1}\in K$ moreover $h=(hk)k^{-1}\in G$). Therefore we have:
\[Gx=\left\{\begin{array}{lc} \{x_i,x_{n-i}\} & x=x_i,i=0,\ldots,n\:,\\ (x_{i-1},x_i)\cup(x_{n-i},x_{n-(i-1)}) & x\in(x_{i-1},x_i), i=1,\ldots,n \:. \end{array} \right.\]
Thus $\{\overline{Gx}:x\in[0,1]\}=\{\{x_i,x_{n-i}\}:0\leq i\leq n\}\cup\{[x_{i-1},x_i]\cup[x_{n-i},x_{n-(i-1)}]:1\leq i\leq n\}$ has $n+1$ elements and $h(G,[0,1])=n$.
\end{proof}
\begin{theorem}
For $f\in Homeo([0,1])$ we have $h(<f>,[0,1])=+\infty$.
\end{theorem}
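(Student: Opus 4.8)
The plan is to exhibit, for every integer $m\geq1$, a strictly increasing chain of $m$ proper closed invariant subsets of $[0,1]$ and then to cap it with $[0,1]$ itself; since $[0,1]$ is closed and invariant, this produces a chain $Y_0\subset\cdots\subset Y_m=[0,1]$ of distinct closed invariant sets and hence shows $h(\langle f\rangle,[0,1])\geq m$ for all $m$, i.e. $h(\langle f\rangle,[0,1])=+\infty$. The mechanism producing the proper invariant sets will be orbit closures: for $x\in[0,1]$ the set $\overline{\langle f\rangle x}$ is closed and invariant, and the key quantitative fact I will need is that each such orbit closure is countable, so that any finite union of them is again a countable — in particular proper — closed invariant subset of the uncountable space $[0,1]$.

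First I would prove the orbit-closure claim. A homeomorphism $f$ of $[0,1]$ sends $\{0,1\}$ to itself, so it is either orientation preserving (strictly increasing) or orientation reversing (strictly decreasing); in either case $g:=f^2$ is a strictly increasing homeomorphism of $[0,1]$. For a strictly increasing $g$ and any $y$, the bi-infinite sequence $(g^n(y))_{n\in\mathbb Z}$ is monotone (it increases if $g(y)>y$, decreases if $g(y)<y$, and is constant if $g(y)=y$), so it has at most one forward and one backward limit and its closure $\overline{\langle g\rangle y}$ is countable. Since $\langle f\rangle x=\langle g\rangle x\cup\langle g\rangle f(x)$, the orbit $\langle f\rangle x$ is a union of two such monotone sequences, whence $\overline{\langle f\rangle x}$ is countable.

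Next I would build the chain by induction. Choose any $x_1\in(0,1)$ and set $Z_1=\overline{\langle f\rangle x_1}$. Given a countable closed invariant set $Z_k$, countability forces $Z_k\neq[0,1]$, so I may pick $x_{k+1}\in[0,1]\setminus Z_k$ and put $Z_{k+1}=Z_k\cup\overline{\langle f\rangle x_{k+1}}$, which is again closed, invariant, and countable, and which strictly contains $Z_k$ because $x_{k+1}\in Z_{k+1}\setminus Z_k$. This yields, for each $m$, a strictly increasing chain $Z_1\subset Z_2\subset\cdots\subset Z_m$ of proper closed invariant sets; appending $[0,1]$ gives the desired chain of length $m$ ending at the whole space, and letting $m\to\infty$ finishes the proof.

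The only real obstacle is the orbit-closure claim; once monotonicity of increasing interval homeomorphisms is invoked it is routine, and the reduction from $f$ to $g=f^2$ disposes of the orientation-reversing case. The rest is bookkeeping: finite unions of countable sets stay countable and therefore proper, and the definition of height only requires the top of the chain to equal the phase space, which is arranged by the final capping step.
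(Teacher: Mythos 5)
Your proof is correct, and it reaches the conclusion by a route that differs from the paper's in two useful ways. The paper splits into the orientation-preserving and orientation-reversing cases (handling the latter via $f^2$, as you do), but within the increasing case it further splits on whether $f$ has infinitely many fixed points; when it has finitely many, it picks a non-periodic $x_1$, exploits monotonicity of the orbit to get a fundamental interval $(x_1,f(x_1))$, and selects points $x_1<x_2<\cdots<f(x_1)$ one at a time, arguing by order considerations that each new point avoids the closures of all previous orbits; it then concludes from the infinitude of the family $\{\overline{\langle f\rangle x_k}:k\geq1\}$ that the height is infinite. You replace both the fixed-point dichotomy and the order-theoretic avoidance argument with a single cardinality observation: monotonicity of $(g^n(y))_{n\in\mathbb Z}$ for $g=f^2$ increasing forces every orbit closure $\overline{\langle f\rangle x}$ to be countable, so any finite union of them is a proper subset of $[0,1]$ and a fresh point can always be chosen. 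This buys uniformity (no case analysis on $\mathrm{Per}(f)$) and it also makes explicit the chain $Z_1\subset\cdots\subset Z_m\subset[0,1]$ demanded by the definition of height, a step the paper leaves implicit when it passes from ``infinitely many distinct orbit closures'' to $h(\langle f\rangle,[0,1])=+\infty$. What the paper's more hands-on construction buys in exchange is finer information about where the distinct invariant sets live (all inside one wandering interval), but for the statement at hand your argument is complete and, if anything, tidier.
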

\begin{proof}
For $f\in Homeo([0,1])$ we have the following two cases:
\\
{\it Case 1}: $f(0)=0,f(1)=1$. In this case $f$ is increasing and the only
periodic points of $f$ are fix points. If $f$ has infinite fix points, then $\{\{x\}:x$ is a fix point of $f\}$ is an infinite set of closed invariant subsets of $(<f>,[0,1])$ thus $h(<f>,[0,1])=+\infty$. Otherwise the collection of periodic (fix) points of $f$ is finite. Choose $x_1\in[0,1]\setminus Per(f)$,
then $x_1<f(x_1)$ or $f(x_1)<x_1$ ($f(x_1)<x_1$ leads to $x_1<f^{-1}(x_1)$, note to $<f>=<f^{-1}>$), we may suppose $x_1<f(x_1)$, thus
$\cdots f^{-2}(x_1)<f^{-1}(x_1)<x_1<f(x_1)<f^2(x_1)<\cdots$, choose
$x_2\in(x_1,f(x_1))$. It's evident that $x_2\notin\overline{\{f^n(x_1):n\in\mathbb{Z}\}}$ moreover $\cdots<f^{-2}(x_2)<f^{-1}(x_2)<x_2<f(x_1)<f(x_2)<\cdots$, choose $x_3\in(x_2,f(x_1))$, then $x_3\notin \overline{\{f^n(x_1):n\in\mathbb{Z}\}}\cup
\overline{\{f^n(x_2):n\in\mathbb{Z}\}}$. By using this method and choose sequence $x_1<x_2<x_3<\cdots<f(x_1)$ we have $x_k\notin\bigcup\{\overline{\{f^n(x_i):n\in\mathbb{Z}\}}:i<k\}$ for all $k>1$. Hence $\{\overline{<f>x_n}:n\geq1\}$ is infinite and $h(<f>,[0,1])=+\infty$.
\\
{\it Case 2}: $f(0)=1,f(1)=0$. By Case 1, $h(<f^2>,[0,1])=+\infty$, thus
$\{\overline{<f^2>x}:x\in[0,1]\}$ is infinite. For all $x\in[0,1]$ we have
$\overline{<f>x}=\overline{<f^2>x}\cup \overline{<f^2>f(x)}$, thus
$\{\overline{<f>x}:x\in[0,1]\}$ is infinite too and $h(<f>,[0,1])=+\infty$.
\end{proof}
\section{Circle}
\noindent Consider unit circle ${\mathbb S}^1:=\{e^{2\pi i\theta}:\theta\in[0,1]\}$. In this section we prove that the collection of all possible heights for circle transformation group $(G,{\mathbb S}^1)$ is $\{0,1,2,3,\ldots\}\cup\{+\infty\}$.
\begin{theorem}\label{salam35}
$P_h({\mathbb S}^1)=\{0,1,2,3,\ldots\}\cup\{+\infty\}$.
\end{theorem}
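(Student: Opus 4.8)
The plan is to observe first that the inclusion $P_h(\mathbb{S}^1)\subseteq\{0,1,2,\ldots\}\cup\{+\infty\}$ is immediate, since by definition every height is a non-negative integer or $+\infty$; so the only task is to realize each of these values. The value $+\infty$ is handled at once by Lemma~\ref{salam20}, because $\mathbb{S}^1$ is infinite. For the value $0$ I would take an irrational $\alpha$, the rotation $R_\alpha(e^{2\pi i\theta})=e^{2\pi i(\theta+\alpha)}$, and $G=\langle R_\alpha\rangle$; every $G$-orbit is then dense, so $\mathbb{S}^1$ is the only nonempty closed invariant subset of $(G,\mathbb{S}^1)$ and hence $h(G,\mathbb{S}^1)=0$. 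Thus only the positive integers require real work.

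The engine for the positive integers is a counting principle. Call $(G,\mathbb{S}^1)$ \emph{tame} if there is a finite $G$-invariant set $F\subseteq\mathbb{S}^1$ such that $G$ permutes the open arcs cut out by $F$ and the setwise stabilizer of each such arc acts transitively on it. For a tame group every orbit is either a finite subset of $F$ (hence closed) or a union of open arcs whose closure is obtained by adjoining the finitely many endpoints, all of which lie in $F$. I would first prove that for a tame group the closed invariant subsets are exactly the unions of orbits that happen to be closed, and that a longest chain of them is obtained by adjoining the orbits one at a time, taking the finite orbits first and the arc-orbits afterwards; each partial union is closed because, once all point-orbits are present, adjoining an arc-orbit contributes no new boundary. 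Since an invariant set is a union of orbits and every strict step of a chain adds at least one orbit, it follows that $h(G,\mathbb{S}^1)$ equals the number of $G$-orbits minus $1$. With this in hand it suffices to produce, for each $n\ge 1$, a tame subgroup of $Homeo(\mathbb{S}^1)$ with exactly $n+1$ orbits.

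For odd $n$ (so $n+1=2a$ with $a=(n+1)/2$) I would take $F=\{p_1,\ldots,p_a\}$ and let $G$ be the group of all orientation-preserving homeomorphisms fixing $F$ pointwise; then the orbits are the $a$ singletons together with the $a$ open arcs, exactly $2a=n+1$ of them. For even $n$ (so $n+1=2k+1$ with $k=n/2$) I would take a reflection $\sigma$ of $\mathbb{S}^1$ whose axis meets the circle in two points not in $F$, place $k$ pairs of points interchanged by $\sigma$, and let $G$ be generated by $\sigma$ together with all orientation-preserving homeomorphisms fixing these $2k$ points pointwise. Then the point-orbits are the $k$ symmetric pairs, while among the $2k$ arcs the two meeting the axis are each preserved by $\sigma$ and the remaining $2k-2$ are interchanged in $k-1$ pairs, giving $k+\bigl(2+(k-1)\bigr)=2k+1=n+1$ orbits. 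In both families one checks that $G$ is a subgroup and that tameness holds, after which the counting principle yields $h(G,\mathbb{S}^1)=n$.

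The main obstacle, and the reason two families are needed, is a genuine rigidity: one cannot merge two distinct arcs into a single orbit while keeping their endpoints as separate fixed points, since any homeomorphism carrying one arc to another must carry endpoints to endpoints. Consequently orientation-preserving groups fixing $F$ pointwise can realize only the even orbit counts $2a$ (the odd heights), and an orientation-reversing element is indispensable for the odd orbit counts (the even heights); the reflection $\sigma$ is precisely what merges the prescribed pair of arcs and nothing more. The care to be taken is therefore in verifying, in each family, that the orbits are \emph{exactly} as listed — that $\sigma$ pairs precisely the intended arcs, that each axial arc is a single orbit, and that no further closed invariant subsets appear — so that the counting principle delivers the exact height rather than merely a bound.
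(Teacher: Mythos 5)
Your proposal is correct: the upper inclusion, the value $+\infty$ via Lemma~\ref{salam20}, the value $0$ via an irrational rotation, and the realization of each $n\geq 1$ all go through, and your orbit counts ($2a$ orbits in the pointwise-fixing family for odd $n$, $k+(k+1)$ orbits in the reflection family for even $n$) are accurate. However, your route differs from the paper's. The paper does not build circle groups from scratch: it takes the group $G\leq Homeo([0,1])$ already constructed in Theorem~\ref{salam30} (generated by the homeomorphisms sending $x_i\mapsto x_{n-i}$, which has $n+1$ orbit closures and satisfies $G0=G1=\{0,1\}$) and pushes it forward through the quotient $\theta\mapsto e^{2\pi i\theta}$, checking that $f\mapsto\psi_f$ yields a subgroup $H\leq Homeo(\mathbb{S}^1)$ whose orbit closures biject with those of $G$; a single family then handles all $n\geq1$ at once, because the interval reflection descends to a circle reflection fixing the identified point $0\sim1$. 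Your construction is intrinsic to the circle and pays for that with a parity case split, but it buys two things the paper leaves implicit: an explicit statement and proof sketch of the counting principle $h(G,\mathbb{S}^1)=|\{\overline{Gx}:x\in\mathbb{S}^1\}|-1$ (finite point-orbits first, then arc-orbit closures, each partial union closed), which the paper uses silently in every section, and independence from the interval theorem. One caveat on your closing remark: the claim that an orientation-reversing element is \emph{indispensable} for odd orbit counts is overstated (an orientation-preserving finite-order rotation permuting the marked points can also merge arcs), but this remark is only motivational and does not affect the proof.
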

\begin{proof}
$(Homeo({\mathbb S}^1),{\mathbb S}^1)$ is minimal and $h(Homeo({\mathbb S}^1),{\mathbb S}^1)=0$ as a matter of fact for $\alpha\in{\mathbb R}$ if $\varphi_\alpha(z)=ze^{2\pi i\alpha}$ ($z\in{\mathbb S}^1$), then:
\[h(<\varphi_\alpha>,{\mathbb S}^1)=\left\{\begin{array}{lc} 0 & \alpha\in{\mathbb R}\setminus{\mathbb Q}\:, \\ +\infty & \alpha\in{\mathbb Q}\:.\end{array}\right.\]
Using the same notations as in the proof of Theorem~\ref{salam30} for $n\geq1$ there exists subgroup $G$ of $Homeo([0,1])$ with $G0=G1=\{0,1\}$ and $|\{\overline{Gx}:x\in[0,1]\}|=n+1$. For $f\in G$ define $\psi_f:{\mathbb S}^1\to{\mathbb S}^1$ with $\psi_f(e^{2\pi i\theta})=e^{2\pi i f(\theta)}$ ($\theta\in[0,1]$). Then $H=\{\psi_f:f\in G\}$ is a subgroup of $Homeo({\mathbb S}^1)$ and $\{\overline{Hz}:z\in{\mathbb S}^1\}=\{\{e^{2\pi iw}:w\in\overline{Gx}\}:x\in [0,1]\}$ moreover $\eta:\{\overline{Gx}:x\in[0,1]\}\to\{\overline{Hz}:z\in{\mathbb S}^1\}$
with $\eta(\overline{Gx})=\{e^{2\pi iw}:w\in\overline{Gx}\}$ is bijective. Thus $|\{\overline{Hz}:z\in{\mathbb S}^1\}|=|\{\overline{Gx}:x\in[0,1]\}|$ and $h(H,{\mathbb S}^1)=h(G,[0,1])=n$.
\end{proof}
\section{Star}
\noindent For $n\geq3$ suppose $\ast_n$ is the star graph with $n+1$ vertices $\{0,1,\ldots,n\}$ and $n$ edges $\{[0,1],[0,2],\ldots,[0,n]\}$ (the following graph):
\vspace{3mm}
\begin{center}
\unitlength .4mm 
\linethickness{0.4pt}
\ifx\plotpoint\undefined\newsavebox{\plotpoint}\fi 
\begin{picture}(83.426,85.938)(0,0)
\multiput(37.148,38.921)(.0674224344,-.080548926){419}{\line(0,-1){.080548926}}
\multiput(37.398,38.421)(.89285714,-.06632653){49}{\line(1,0){.89285714}}
\multiput(37.398,38.921)(.0853960396,.067450495){404}{\line(1,0){.0853960396}}
\put(71.898,66.171){\line(0,1){0}}
\put(71.898,66.171){\line(0,1){0}}
\multiput(37.648,38.921)(.06666667,.98888889){45}{\line(0,1){.98888889}}
\put(40.648,83.421){\line(0,1){0}}
\put(40.648,83.421){\line(0,1){0}}
\put(40.648,83.421){\line(0,1){0}}
\put(40.648,83.421){\line(0,1){0}}
\multiput(37.148,38.921)(-.0690578158,.0674518201){467}{\line(-1,0){.0690578158}}
\put(71.898,65.921){\circle*{2.5}}
\put(40.898,82.921){\circle*{2.5}}
\put(4.949,69.677){\circle*{2.5}}
\put(19.898,25.171){\circle*{2.5}}
\put(16.648,32.171){\circle*{2.5}}
\put(16.648,38.921){\circle*{2.5}}
\put(25.148,21.921){\circle*{2.5}}
\put(32.648,18.171){\circle*{2.5}}
\put(37.398,38.921){\circle*{2.5}}
\put(65.648,5.171){\circle*{2.5}}
\put(80.648,35.171){\circle*{2.5}}
\put(67.648,-.329){$n-1$}
\put(83.426,31.881){$n$}
\put(75.114,67.421){$1$}
\put(39.898,85.938){$2$}
\put(2.04,72.029){$3$}
\put(31.648,33.921){$0$}
\end{picture}
\end{center}
In this section we prove that $P_h(\ast_n)=\{2,3,\ldots\}\cup\{+\infty\}$.
\begin{lemma}\label{salam40}
For $n\geq3$, in graph transformation group $(G, \ast_n)$ we have
\linebreak
$h(G, \ast_n)\geq2$, moreover $h(Homeo(\ast_n),\ast_n)=2$.
\end{lemma}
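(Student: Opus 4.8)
The plan is to mirror the structure of the proof of Lemma~\ref{salam10}, replacing the two‑point boundary $\{0,1\}$ of the interval by the two topologically distinguished finite sets of the star: its unique branch point and its set of endpoints. The key observation is that for $n\geq3$ the central vertex $0$ is the only point $p\in\ast_n$ for which $\ast_n\setminus\{p\}$ has $n\geq3$ connected components, whereas deleting an endpoint leaves a connected set and deleting an interior point of an edge produces exactly two components. Since the number of components of $\ast_n\setminus\{p\}$ is a topological invariant, every $f\in Homeo(\ast_n)$ must fix $0$ and must permute the endpoints $\{1,\ldots,n\}$ among themselves while carrying interior edge points to interior edge points.

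First I would prove the inequality $h(G,\ast_n)\geq2$. For any transformation group $(G,\ast_n)$ every $g\in G$ is a homeomorphism, so by the preceding remark $g0=0$ and $g\{1,\ldots,n\}=\{1,\ldots,n\}$. Hence $\{0\}$ and $\{0,1,\ldots,n\}$ are both closed invariant subsets, and since $n\geq3$ furnishes at least one endpoint and at least one interior point, the chain
\[\{0\}\subset\{0,1,\ldots,n\}\subset\ast_n\]
is strict. This exhibits a chain of length $2$ of closed invariant sets ending at $\ast_n$, so $h(G,\ast_n)\geq2$.

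Next I would compute the orbits of the full group $Homeo(\ast_n)$ to pin down the exact height. I claim there are exactly three orbits: $\{0\}$, the endpoint set $\{1,\ldots,n\}$, and the set $I$ of all interior edge points. The point $0$ is fixed, giving the first orbit; transitivity on the endpoints follows from the homeomorphism that interchanges two edges $[0,i]$ and $[0,j]$ and fixes the remaining edges; transitivity on $I$ follows by first permuting edges to bring two given interior points onto a common edge and then applying an increasing self‑homeomorphism of that edge fixing its two endpoints. Listing the unions of these orbits that happen to be closed, one finds that $I$, $\{0\}\cup I$ and $\{1,\ldots,n\}\cup I$ all fail to be closed, since $I$ is dense and open with $\overline{I}=\ast_n$; the only nonempty closed invariant sets are therefore $\{0\}$, $\{1,\ldots,n\}$, $\{0,1,\ldots,n\}$ and $\ast_n$.

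Finally, since $\{0\}$ and $\{1,\ldots,n\}$ are incomparable and both are contained only in $\{0,1,\ldots,n\}$ and $\ast_n$, every chain of closed invariant sets ending at $\ast_n$ can use at most one of these two sets, so its length cannot exceed $2$; combined with the lower bound this yields $h(Homeo(\ast_n),\ast_n)=2$. I expect the main obstacle to be the careful verification of the orbit decomposition for the full homeomorphism group — in particular justifying transitivity on the interior points and, crucially, checking that no union of orbits other than the four listed is closed, since it is exactly the failure of the $I$‑containing unions to be closed that caps the height at $2$.
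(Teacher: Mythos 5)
Your proof is correct and follows essentially the same route as the paper: the chain $\{0\}\subset\{0,1,\ldots,n\}\subset\ast_n$ gives the lower bound, and the three-orbit decomposition of $(Homeo(\ast_n),\ast_n)$ into $\{0\}$, $\{1,\ldots,n\}$ and the interior points gives the upper bound. You supply more detail than the paper does (the component-counting argument forcing $g0=0$, and the explicit check that a chain cannot use both of the incomparable sets $\{0\}$ and $\{1,\ldots,n\}$), but the substance is identical.
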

\begin{proof}
Since $\{0\}\subset\{0,1,\ldots,n\}\subset\ast_n$ is a chain of closed invariant subsets of $(G, \ast_n)$, we have $h(G, \ast_n)\geq2$. Moreover:
\[\overline{Homeo(\ast_n)x}=\left\{\begin{array}{lc} \{0\} & x=0\:, \\
\{1,\ldots,n\} & x=1,\ldots,n\:, \\ \ast_n & x\in\ast_n\setminus\{0,1,\ldots,n\}\:.
\end{array}\right.\]
Thus $\{\overline{Homeo(\ast_n)x}:x\in\ast_n\}$ has $3$ elements and $h(Homeo(\ast_n),\ast_n)=2$.
\end{proof}
\begin{theorem}\label{salam50}
For $n\geq3$ we have
$P_h(\ast_n)=\{2,3,\ldots\}\cup\{+\infty\}$.
\end{theorem}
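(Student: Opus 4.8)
The plan is to split the statement into the easy containments and the hard realizability direction. First I would record that $P_h(\ast_n)\subseteq\{2,3,\ldots\}\cup\{+\infty\}$: every transformation group $(G,\ast_n)$ has $h(G,\ast_n)\geq2$ by Lemma~\ref{salam40}, and there is nothing below $2$ to exclude, so the only finite values that could occur lie in $\{2,3,\ldots\}$. Next I would observe that $+\infty\in P_h(\ast_n)$: since $\ast_n$ is an infinite Hausdorff (compact metric) space, Lemma~\ref{salam20} applies directly and is realized by $G=\{id_{\ast_n}\}$. Thus the whole content of the theorem is to produce, for each finite $m\geq2$, a subgroup $G$ of $Homeo(\ast_n)$ with $h(G,\ast_n)=m$.

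For that I would use the same bookkeeping device that underlies the proofs of Theorems~\ref{salam30} and~\ref{salam35}: when a subgroup $G\leq Homeo(\ast_n)$ has only finitely many orbit closures, the lattice of closed invariant subsets is exactly the lattice of unions of orbit closures, and a longest chain is obtained by adjoining one orbit closure at a time, so that $h(G,\ast_n)=\bigl|\{\overline{Gx}:x\in\ast_n\}\bigr|-1$. The task therefore reduces to exhibiting, for each $m\geq2$, a finite-orbit-closure subgroup whose orbit closures number exactly $m+1$. The structural input I would exploit is that every homeomorphism of $\ast_n$ fixes the central vertex $0$ (it is the unique branch point), permutes the leaves $\{1,\ldots,n\}$, and carries each edge onto an edge; hence any such $G$ has orbit closures of three recognizable kinds, namely the singleton $\{0\}$, finite sets of fixed or permuted subdivision points, finite leaf-orbits, and the closures of the sub-arc ``regions'' cut out on the edges. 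My plan is to adapt the construction of Theorem~\ref{salam30}: impose a partition of the edges into transitivity blocks together with a prescribed set of fixed subdivision points on each block (the center $0$ playing the role that a fixed interior subdivision point plays in the interval), and then read off the orbit closures and count them.

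The hard part will be fine control of this count. The central vertex contributes one unavoidable orbit closure, and because a homeomorphism taking one leaf to another must take the whole incident edge (hence the adjacent region) along with it, the leaf-orbits and the outermost region-orbits are rigidly linked; consequently the most symmetric subdivision choices tend to move the orbit-closure count in steps of two rather than one. So the delicate step is to design a family of subgroups flexible enough to hit \emph{every} value $m+1\geq 3$ and not merely an arithmetic subprogression. Concretely I would treat a path through two edges as an interval in the sense of Theorem~\ref{salam30}, with $0$ as its distinguished fixed interior point, and then adjoin the remaining $n-2$ edges either as a single transitive block or with exactly one additional fixed leaf, using the latter as a one-unit adjustment to correct parity of the count. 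I would then verify explicitly, for the resulting families, that $\bigl|\{\overline{Gx}:x\in\ast_n\}\bigr|$ runs through all integers $\geq3$, which by the displayed height formula yields $h(G,\ast_n)=m$ for every $m\geq2$ and completes the identification $P_h(\ast_n)=\{2,3,\ldots\}\cup\{+\infty\}$.
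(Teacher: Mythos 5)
Your reduction of the theorem to the easy containments plus the realization of each finite $m\geq 2$, together with the bookkeeping identity $h(G,\ast_n)=|\{\overline{Gx}:x\in\ast_n\}|-1$ for subgroups with finitely many orbit closures, is exactly the paper's framework. The gap is in your mechanism for realizing the \emph{odd} heights $\geq 3$. Your proposed parity correction --- adjoining ``exactly one additional fixed leaf'' to an otherwise symmetric finite subdivision scheme --- does not change the orbit-closure count by one: distinguishing a leaf $i$ splits off both the singleton $\{i\}$ from the leaf orbit \emph{and} the arc $[0,i]$ from the union of the remaining edges, so the count moves by two. This is not a defect of that particular choice but an obstruction to the entire strategy. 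Every homeomorphism of $\ast_n$ fixes the branch point $0$ and hence preserves, on each edge, the orientation from $0$ toward the leaf; therefore no region cut out by finitely many marked points is ever mapped to itself with its endpoints exchanged. A Burnside/Euler-characteristic count on the resulting finite subdivided tree then forces (number of marked-point orbits) $+$ (number of region orbits) to be odd, so \emph{any} subgroup defined by finite marked-point data --- transitivity blocks of edges, prescribed fixed subdivision points, prescribed permutation behaviour --- has an odd number of orbit closures and hence even height. Your family can only ever produce $\{2,4,6,\ldots\}$.

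The paper breaks this parity barrier with an infinite combinatorial datum: inside one region $[x_0,x_1]$ of the edge $[0,1]$ it distinguishes a discrete set $\{y_k\}_{k\in\mathbb{Z}}$ of order type $\mathbb{Z}$ accumulating exactly at $x_0$ and $x_1$, and passes to the subgroup $H$ of the previous group preserving this set. This adds exactly \emph{one} new orbit closure, namely $\{y_k:k\in\mathbb{Z}\}\cup\{x_0,x_1\}$, while $[x_0,x_1]$ remains the orbit closure of the other interior points of that region; the height becomes $2p+3$ for $p\geq 1$, covering $5,7,9,\ldots$. The remaining value $3$ is handled by a separate construction $M$ using sets $\{w^i_k\}_{k\in\mathbb{Z}}$ distributed over all $n$ edges and permuted cyclically. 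To repair your argument you need a device of this kind (an invariant infinite discrete set whose closure adds only its two limit points); no adjustment of finite subdivision data will suffice.
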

\begin{proof}
By Lemmas~\ref{salam40} and~\ref{salam20},
 $+\infty,2\in P_h(\ast_n)\subseteq\{2,3,\ldots\}\cup\{+\infty\}$.
 For $p\geq1$ and $i\in\{1,\ldots,n\}$ choose $x_0=0<x_1<\cdots<x_p=1$ in edge $[0,1]$
 and let $G=\{f\in Homeo(\ast_n):\forall i\in\{1,\ldots,p\}\:f(x_i)=x_i\}$. Then:
\[\overline{Gx}=\left\{\begin{array}{lc} \{x\} & x=x_0,\ldots,x_p\:,\\
\left[x_{i-1},x_i\right] & x_{i-1}<x<x_i , i=1,\ldots,p\:, \\
\{2,\dots,n\} & x=2,\ldots,n\: , \\
\ast_n\setminus(0,1] & x\in\ast_n\setminus([0,1]\cup\{2,\dots,n\})  \: . \end{array}\right.\]
And $\{\overline{Gx}:x\in\ast_n\}$ has $2p+3$ thus $h(G,\ast_n)=2p+2$.
\\
Choose $\{y_k\}_{k\in{\mathbb Z}}$ in $[x_0,x_1]$ with $x_0<\cdots<y_{-2}<y_{-1}<y_0<y_1<y_2\cdots<x_1$ and
${\displaystyle\lim_{k\to+\infty}y_k}=x_1$, ${\displaystyle\lim_{k\to+\infty}y_{-k}}=x_0$. Let $H=\{f\in G:f(\{y_k:k\in{\mathbb Z}\})=\{y_k:k\in{\mathbb Z}\}\}$, then:
\[\overline{Hx}=\left\{\begin{array}{lc} \{x\} & x=x_0,\ldots,x_p\:,\\
\{y_k:k\in{\mathbb Z}\}\cup\{x_0,x_1\} & x\in \{y_k:k\in{\mathbb Z}\}\: , \\
\left[x_{i-1},x_i\right] & x_{i-1}<x<x_i , i=1,\ldots,p\:, \\
\{2,\dots,n\} & x=2,\ldots,n\: , \\
\ast_n\setminus(0,1] & x\in\ast_n\setminus([0,1]\cup\{2,\dots,n\})  \: . \end{array}\right.\]
And $\{\overline{Hx}:x\in\ast_n\}$ has $2p+4$ thus $h(H,\ast_n)=2p+3$.
\\
Choose sequences $\{w^i_k\}_{k\in{\mathbb Z}}$ in $[0,i]$ with $0<\cdots<w^i_{-2}<w^i_{-1}<w^i_0<w^i_1<w^i_2\cdots<i$ and
${\displaystyle\lim_{k\to+\infty}w^i_k}=i$, ${\displaystyle\lim_{k\to+\infty}y_{-k}}=0$. Let $M=\{f\in Homeo(\ast_n):\forall i\in\{0,\ldots,n\}\:f(\{w^i_k:k\in{\mathbb Z}\})=\{w^{i+1}_k:k\in{\mathbb Z}\}\}$ (where $i+1$ has been considered module $n$, so for all $f\in M$ we have $f(\{w^k_k:k\in{\mathbb Z}\})=\{w^0_k:k\in{\mathbb Z}\}$), then:
\[\overline{Mx}=\left\{\begin{array}{l} \{0\} \: \: \: \: \: x=0\:,\\
\{1,\ldots,n\} \: \: \: \: \: x=1,\ldots,n\: , \\
\{w^i_k:k\in{\mathbb Z},1\leq i\leq n\}\cup\{0,1,\ldots,n\} \: \: \: \: \: x\in \{w^i_k:k\in{\mathbb Z},1\leq i\leq n\}\: , \\
\ast_n \: \: \: \: \: x\in\ast_n\setminus(\{w^i_k:k\in{\mathbb Z},1\leq i\leq n\}\cup\{0,\ldots,n\})\:.
\end{array}\right.\]
And $\{\overline{Mx}:x\in\ast_n\}$ has $4$ elements thus $h(M,\ast_n)=3$, which completes the proof.
\end{proof}
\section{An arising question}
\noindent Using previous sections we have
\begin{itemize}
\item[] $P_h({\mathbb S}^1)=\{0,1,2,\cdots\}\cup\{+\infty\}$,
\item[] $P_h([0,1])=\{1,2,3,\cdots\}\cup\{+\infty\}$,
\item[] $P_h(\ast_n)=\{2,3,4\cdots\}\cup\{+\infty\}$ ($n\geq3$).
\end{itemize}
Now this question arises: For $p\geq0$, is there any graph like $\mathcal{G}$ with
\linebreak
$P_h(\mathcal{G})=\{p,p+1,p+2,\cdots\}\cup\{+\infty\}$?
\\
In this section we prove that the above question has positive answer.
\begin{lemma}\label{salam60}
In transformation group $(G,X)$ if $H$ is a subgroup of $G$, then $h(G,X)\leq h(H,X)$,
in particular $p_h(X)\subseteq \{i\geq0:i\geq h(Homeo(X),X)\}\cup\{+\infty\}$.
\end{lemma}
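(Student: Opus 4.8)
The plan is to reduce everything to the elementary observation that invariance is monotone with respect to reverse inclusion of groups. First I would verify the core inequality $h(G,X)\leq h(H,X)$ when $H$ is a subgroup of $G$. The key point is that if $Y\subseteq X$ is $G$-invariant, meaning $GY\subseteq Y$, then for the subgroup $H$ we have $HY\subseteq GY\subseteq Y$, so $Y$ is automatically $H$-invariant. Consequently every closed $G$-invariant subset of $X$ is also a closed $H$-invariant subset of $X$.

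Next I would pass from individual invariant sets to chains. Any chain $Y_0\subset Y_1\subset\cdots\subset Y_n=X$ of distinct closed $G$-invariant subsets is, by the previous step, a chain of distinct closed $H$-invariant subsets of the same length $n$. Hence every integer $n$ admissible in the supremum defining $h(G,X)$ is admissible in the supremum defining $h(H,X)$ as well; taking suprema over the possibly larger family available to $H$ gives $h(G,X)\leq h(H,X)$. I would read this inequality in the extended value set $\{0,1,2,\ldots\}\cup\{+\infty\}$, so that the case of infinite height is covered uniformly.

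Finally I would deduce the ``in particular'' clause. Every element of $P_h(X)$ has the form $h(K,X)$ for some subgroup $K$ of $Homeo(X)$. Applying the inequality just proved with $G=Homeo(X)$ and $H=K$ yields $h(Homeo(X),X)\leq h(K,X)$. Thus each such value is either $+\infty$ or a finite integer that is at least $h(Homeo(X),X)$, which is precisely the inclusion $P_h(X)\subseteq\{i\geq0:i\geq h(Homeo(X),X)\}\cup\{+\infty\}$.

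I do not anticipate a genuine obstacle: once the invariance-monotonicity observation is in hand, the argument is purely order-theoretic. The only point demanding a little care is the bookkeeping of the supremum when heights are infinite, which is exactly why I phrase the inequality in the extended set $\{0,1,2,\ldots\}\cup\{+\infty\}$ from the start.
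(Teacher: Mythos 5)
Your argument is correct, and it takes a genuinely different (and in fact more robust) route than the paper's. The paper's proof is a one-line counting argument on orbit closures: it observes that $\overline{Hx}=\overline{Hy}$ implies $\overline{Gx}=\overline{Gy}$, concludes $|\{\overline{Gw}:w\in X\}|\leq|\{\overline{Hw}:w\in X\}|$, and then tacitly invokes the identification of height with the number of distinct orbit closures minus one that is used in computations throughout the paper; that identification is not the definition of $h$, and in general it only bounds the height from above, so the paper's sketch leaves a small gap between ``fewer orbit closures'' and ``smaller height.'' You instead work directly from the definition: $HY\subseteq GY\subseteq Y$ shows every closed $G$-invariant set is closed $H$-invariant, every chain witnessing $h(G,X)\geq n$ witnesses $h(H,X)\geq n$, and the supremum inequality follows, with the $+\infty$ case handled uniformly. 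This buys you a proof that is airtight from the stated definition and needs no auxiliary facts about orbit closures; what the paper's phrasing buys is consistency with its working convention $h(G,X)=|\{\overline{Gx}:x\in X\}|-1$, which it uses in all the explicit height computations. Your deduction of the ``in particular'' clause from the special case $G=Homeo(X)$, $H=K$ matches the intended reading exactly.
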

\begin{proof}
Use the fact that for $x,y\in X$ if $\overline{Hx}=\overline{Hy}$, then $\overline{Gx}=\overline{Gy}$,
thus $|\{\overline{Gw}:w\in X\}|\leq|\{\overline{Hw}:w\in X\}|$.
\end{proof}
\begin{lemma}\label{salam70}
For all $p\geq0$ with $p\equiv2(\mod 4)$ there exists graph $X$ with $P_h(X)=\{p,p+1,\ldots\}\cup\{+\infty\}$.
\end{lemma}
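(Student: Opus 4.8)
The plan is to follow the template set by the circle, interval, and star: first produce one graph $X$ whose \emph{full} homeomorphism group already realizes the smallest admissible height $p$, then realize every larger finite height by passing to subgroups, and finally quote Lemma~\ref{salam20} for $+\infty$.

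Write $p=4\ell-2$ with $\ell\geq1$ (this is exactly the range $p\equiv2\ (\mathrm{mod}\ 4)$). I would take $X$ to be a ``decorated spider'': a central vertex $c$ of degree $3$, three arms $c=v^{(t)}_0-v^{(t)}_1-\cdots-v^{(t)}_\ell$ ($t=1,2,3$) with each $v^{(t)}_\ell$ an endpoint, and a single pendant edge $[v^{(t)}_j,b^{(t)}_j]$ glued at each interior arm-vertex $v^{(t)}_j$, $1\leq j\leq\ell-1$. The pendant edges are the whole point: they promote each $v^{(t)}_j$ to a genuine branch point of degree $3$, so that --- unlike bare subdivision points, which a homeomorphism may slide freely --- the levels along an arm become topologically rigid (for $\ell=1$ this is just $\ast_3$). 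Reading off $Homeo(X)$: the vertex $c$ is the unique vertex all of whose neighbours have degree $3$, hence fixed; along each arm the levels are then distinguished inductively, while the three arms may be permuted arbitrarily. So the orbit-closures are exactly $\{c\}$, the level sets $\{v^{(t)}_j:t=1,2,3\}$ and (for $j<\ell$) $\{b^{(t)}_j:t=1,2,3\}$, the open arm-edges (one orbit per level), and the open spike-edges: in all $2\ell$ vertex-orbits and $2\ell-1$ edge-orbits, i.e. $N=4\ell-1$ orbit-closures. Since every closed invariant set is a union of whole orbit-closures, a chain of distinct ones has length at most $N-1$, and adjoining first all vertex-orbits and then all edge-orbits (so that every stage is closed) attains $N-1=p$. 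Thus $h(Homeo(X),X)=p$; by Lemma~\ref{salam60}, $P_h(X)\subseteq\{p,p+1,\ldots\}\cup\{+\infty\}$, and $+\infty\in P_h(X)$ by Lemma~\ref{salam20} since $X$ is infinite.

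It remains to realize each integer $m\geq p$ as a height, using two independent moves. (i) Marking $q\geq0$ interior points on one chosen arm-edge, simultaneously in all three arms: a homeomorphism fixing $c$ and the relevant arm-vertex is orientation-preserving on that arc, so it fixes each marked point, the single edge-orbit splits into $q$ point-orbits and $q+1$ open-interval orbits, and $h$ rises by exactly $2q$, realizing $p,p+2,p+4,\ldots$ (ii) Instead \emph{preserving} a bi-infinite sequence $y_i$ on an arm-edge with $y_i$ tending to one endpoint and $y_{-i}$ to the other --- exactly the device of Theorem~\ref{salam50} --- makes $\{y_i\}$ one orbit and the gaps $(y_i,y_{i+1})$ another, so that edge carries $2$ orbits instead of $1$ and $h$ rises by $1$; combining this with $q$ marked points on a different edge gives $p+2q+1$, hence $p+1,p+3,p+5,\ldots$ (taking $q=0$ for $p+1$ itself). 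Together (i) and (ii) hit every integer $\geq p$, giving $P_h(X)=\{p,p+1,\ldots\}\cup\{+\infty\}$.

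The main obstacle is the orbit bookkeeping, both for the base graph and for the two moves: one must verify rigorously that $Homeo(X)$ fixes $c$ and every arm-level while only permuting the arms (so that each level really collapses to a single orbit and $N=4\ell-1$ exactly), and then that marking points and preserving a sequence change the total orbit count by precisely $+2q$ and $+1$, with no accidental further identifications among the untouched orbits. Once these orbit decompositions are pinned down, confirming that each intermediate set in the displayed chains is closed is routine.
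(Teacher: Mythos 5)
Your overall strategy is the same as the paper's: build a tree whose full homeomorphism group has exactly $p+1$ orbit closures (so that $h(Homeo(X),X)=p$), invoke Lemmas~\ref{salam60} and~\ref{salam20} to get $P_h(X)\subseteq\{p,p+1,\ldots\}\cup\{+\infty\}$ and $+\infty\in P_h(X)$, and then realize every larger finite height by the marked-points and preserved-sequence devices of Theorem~\ref{salam50}. The paper does exactly this with a different decoration (a path $3-4-\cdots-n$ carrying $i$ pendant leaves at vertex $i$, giving $4n-9$ orbit closures and height $4n-10$), and then obtains all larger heights by restricting to subgroups whose action on the star $\ast_3$ at vertex $3$ realizes a prescribed height $t\geq 2$.

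There is, however, a genuine gap in your orbit count for every $\ell\geq2$, i.e.\ for every $p\geq6$ in the residue class. The tip $v^{(t)}_\ell$ and the pendant vertex $b^{(t)}_{\ell-1}$ are both degree-one vertices adjacent to the same branch vertex $v^{(t)}_{\ell-1}$, so the homeomorphism of $X$ that interchanges the two arcs $[v^{(t)}_{\ell-1},v^{(t)}_\ell]$ and $[v^{(t)}_{\ell-1},b^{(t)}_{\ell-1}]$ and is the identity elsewhere lies in $Homeo(X)$. It merges these two vertex orbits into one and the two corresponding edge orbits into one, so the number of orbit closures is $4\ell-3$, not $4\ell-1$, and $h(Homeo(X),X)=4\ell-4\equiv0\ (\mathrm{mod}\ 4)$. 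In particular $4\ell-4\in P_h(X)$, so $P_h(X)\neq\{4\ell-2,4\ell-1,\ldots\}\cup\{+\infty\}$. (For $\ell=2$ your graph has only five orbit closures: $\{c\}$, the level $\{v^{(t)}_1\}$, the six leaves, the three edges at $c$, and the union of the six leaf edges.) The decoration must make arm edges and pendant edges non-interchangeable; the paper arranges this by attaching a \emph{different} number of leaves at each branch vertex of a path, so that every non-pendant edge joins two vertices of degree at least $4$. Your two height-raising moves are essentially those of Theorem~\ref{salam50} and would go through once the base graph is repaired (modulo the slightly misstated justification in move (i): one must pass to the subgroup preserving the marked set, and then order-preservation pins down each marked point).
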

\begin{proof}
For $n\geq3$ suppose $X_n$ is the following graph:
\vspace{3mm}
\begin{center}
\unitlength .4mm 
\linethickness{0.4pt}
\ifx\plotpoint\undefined\newsavebox{\plotpoint}\fi 
\begin{picture}(150.25,89.75)(0,0)
\thicklines
\put(49,47.25){\line(0,1){39.75}}
\put(22.25,47.25){\line(0,-1){39.75}}
\multiput(49,47)(.0672043,.21639785){186}{\line(0,1){.21639785}}
\multiput(49,46.75)(-.0672043,.21639785){186}{\line(0,1){.21639785}}
\multiput(22.25,47.5)(-.0672043,-.21639785){186}{\line(0,-1){.21639785}}
\multiput(22.25,47.25)(.0672043,-.21639785){186}{\line(0,-1){.21639785}}
\multiput(49.25,47.5)(.067375887,.092789598){423}{\line(0,1){.092789598}}
\put(36.5,86.25){\circle*{2}}
\put(122.25,86.5){\circle*{2}}
\put(140.25,68){\circle*{2}}
\put(146,48){\circle*{2}}
\put(117.25,8.75){\circle*{2}}
\put(101.75,48){\circle*{2}}
\put(69,20.25){\circle*{2}}
\put(76,20.25){\circle*{2}}
\put(83,20.25){\circle*{2}}
\put(129.5,33.25){\circle*{2}}
\put(127.25,30.25){\circle*{2}}
\put(125,27.25){\circle*{2}}
\put(49.25,86.25){\circle*{2}}
\put(61.75,86.25){\circle*{2}}
\put(78.25,86.25){\circle*{2}}
\put(49.25,47){\circle*{2}}
\put(22.25,47){\circle*{2}}
\put(35,7){\circle*{2}}
\put(22.75,7.25){\circle*{2}}
\put(9.75,7){\circle*{2}}
\multiput(102.25,49)(.067434211,.124177632){304}{\line(0,1){.124177632}}
\multiput(102,48.25)(.125,.067434211){304}{\line(1,0){.125}}
\multiput(101.75,48.25)(.06730769,-.17200855){234}{\line(0,-1){.17200855}}
\put(49.5,43){$4$}
\put(96.5,50.75){$n$}
\put(21.5,49){$3$}
\put(9,1.5){$a_3^1$}
\put(22.5,1.5){$a_3^2$}
\put(36,1.75){$a_3^3$}
\put(35,88.5){$a_4^1$}
\put(47.5,89.75){$a_4^2$}
\put(61.75,89.25){$a_4^3$}
\put(78.5,89){$a_4^4$}
\put(121.5,88.5){$a_n^1$}
\put(143.25,70){$a_n^2$}
\put(150.25,48){$a_n^3$}
\put(118.75,5.25){$a_n^n$}
\thinlines
\put(145.5,47.75){\line(-1,0){123.25}}
\end{picture}
\end{center}
then:
{\small
\[\overline{Homeo(X_n)x}=\left\{\begin{array}{ll} \{x\} & x=3,\ldots,n\:,\\
\{a_i^1,a_i^2,\ldots,a_i^i\} & x=a_i^j,i=3,\ldots,n,j=1,\ldots,i\:,\\
\bigcup\{\left[i,a_i^j\right]:j=1,\ldots,i\} & x\in\bigcup\{(i,a_i^j):j=1,\ldots,i\},i=3,\ldots,n\:,\\
\left[i,i+1\right] & x\in(i,i+1),i=3,\ldots,n-1\:.
\end{array}\right.\]}
So $\{\overline{Homeo(X_n)x}:x\in X_n\}$ has $4n-9$ elements, and $h(Homeo(X_n),x)=4n-10$.
Using Lemmas~\ref{salam20} and~\ref{salam60} we have $4n-10,+\infty\in P_h(X_n)\subseteq
\{4n-10,4n-9,4n-8,\ldots\}\cup\{+\infty\}$. Consider $t\geq2$. Since $\ast_3=[3,a_3^1]\cup[3,a_3^2]\cup[3,a^3_3]$ is a star graph with $3$ edges, by Theorem~\ref{salam50} there exists $K(\subseteq Homeo(\ast_3))$ with $h(K,[3,a_3^1]\cup[3,a_3^2]\cup[3,a^3_3])=t$. Moreover $K3=\{3\}$. Let $G=\{f\in Homeo(X_n):f\restriction_{[3,a_3^1]\cup[3,a_3^2]\cup[3,a^3_3]}\in K\}$, then $\{\overline{Gx}:x\in X_n\}=\{\overline{Kx}:x\in [3,a_3^1]\cup[3,a_3^2]\cup[3,a^3_3]\}\cup\{\overline{Homeo(X_n)x}:x\in X_n\setminus([3,a_3^1]\cup[3,a_3^2]\cup[3,a^3_3])\}$ has $4n-12+(t+1)$ elements, thus $h(G,X_n)=4n+t-12$. Hence $P_h(X_n)=\{4n-10,4n-9,4n-8,\ldots\}\cup\{+\infty\}$. Using $n\geq3$, leads us to the desired result.
\end{proof}
\begin{lemma}\label{salam80}
For all $p\geq0$ with $p\equiv3(\mod 4)$ there exists graph $X$ with $P_h(X)=\{p,p+1,\ldots\}\cup\{+\infty\}$.
\end{lemma}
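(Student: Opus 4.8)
The plan is to mimic the construction of Lemma~\ref{salam70}, but to modify the caterpillar-of-stars $X_n$ by one extra topological feature that raises the base height by exactly $1$, so that it lands in the residue class $3\pmod 4$. Concretely, for $n\geq3$ I would let $Y_n$ be the graph obtained from $X_n$ by gluing a circle ${\mathbb S}^1$ to $X_n$ along one of its points, identified with the vertex $3$; inside the class of topological graphs this is realized by adjoining two arcs joining vertex $3$ to a single auxiliary point, so $Y_n$ is again a finite union of arcs meeting only in endpoints. The claim to establish is $h(Homeo(Y_n),Y_n)=4n-9$.

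The heart of the matter is computing $\{\overline{Homeo(Y_n)x}:x\in Y_n\}$ and checking that the circle contributes exactly one new orbit-closure class. The key point is the cut-point/non-cut-point dichotomy: every interior point of a star edge or path edge of $X_n$ is a cut point of $Y_n$, while every point of the adjoined circle other than vertex $3$ is a non-cut point. Hence no homeomorphism can carry a circle point to an edge point, so the circle neither merges with nor splits any class already present in $X_n$. Moreover vertex $3$ is the unique branch point lying on a cycle, so it is fixed by $Homeo(Y_n)$, and the homeomorphisms fixing $3$ act transitively on the punctured circle; thus the circle yields a single orbit-closure class, whose closure is the whole circle. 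All remaining orbit closures are exactly those of $X_n$, which the paper already counts as $4n-9$. Therefore $Y_n$ has $(4n-9)+1=4n-8$ orbit-closure classes and $h(Homeo(Y_n),Y_n)=4n-9$. By Lemmas~\ref{salam20} and~\ref{salam60} this gives at once $4n-9,+\infty\in P_h(Y_n)\subseteq\{4n-9,4n-8,\ldots\}\cup\{+\infty\}$.

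I would realize every intermediate height exactly as in Lemma~\ref{salam70}. Vertex $3$ still carries a $3$-edge star $\ast_3$, meeting the circle and the rest of $X_n$ only in its centre. Given $t\geq2$, Theorem~\ref{salam50} supplies a subgroup $K\subseteq Homeo(\ast_3)$ with $h(K,\ast_3)=t$ and $K3=\{3\}$; putting $G=\{f\in Homeo(Y_n):f\restriction_{\ast_3}\in K\}$ leaves all orbit closures outside $\ast_3$ unchanged and replaces the $3$ orbit-closure classes of $\ast_3$ by $t+1$. Hence $\{\overline{Gx}:x\in Y_n\}$ has $(4n-8)-3+(t+1)=4n-10+t$ elements, so $h(G,Y_n)=4n-11+t$. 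As $t$ ranges over $\{2,3,\ldots\}$ this covers every integer $\geq4n-9$, giving $P_h(Y_n)=\{4n-9,4n-8,\ldots\}\cup\{+\infty\}$. Finally, as $n$ ranges over $\{3,4,5,\ldots\}$ the value $4n-9$ runs through $3,7,11,\ldots$, which is exactly $\{p\geq0:p\equiv3\pmod4\}$, completing the proof.

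I expect the orbit-closure computation of the second paragraph to be the main obstacle: one must argue rigorously that adjoining the circle adds one and only one class, i.e.\ that the circle is topologically pinned down (through the cut-point criterion and the uniqueness of the branch point lying on the cycle), that vertex $3$ genuinely remains a fixed singleton, and that $Homeo(Y_n)$ acts transitively on the punctured circle. Once that base computation is secure, the subgroup bookkeeping and the counting in the third paragraph are routine and parallel Lemma~\ref{salam70} verbatim.
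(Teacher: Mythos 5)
Your proposal is correct and is essentially the paper's own proof: the paper's $Y_n$ is exactly $X_n$ with a circle adjoined at vertex $3$, and the paper likewise reduces everything to the method of Lemma~\ref{salam70}, obtaining $P_h(Y_n)=\{4n-9,4n-8,\ldots\}\cup\{+\infty\}$. If anything, your write-up supplies more detail than the paper does (the cut-point argument for why the circle contributes exactly one new orbit-closure class, and why vertex $3$ stays fixed), which the paper leaves implicit.
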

\begin{proof}
For $n\geq3$ suppose $Y_n$ is the following graph:
\vspace{3mm}
\begin{center}
\unitlength .4mm 
\linethickness{0.4pt}
\ifx\plotpoint\undefined\newsavebox{\plotpoint}\fi 
\begin{picture}(150.25,89.75)(0,0)
\thicklines
\put(49,47.25){\line(0,1){39.75}}
\put(22.25,47.25){\line(0,-1){39.75}}
\multiput(49,47)(.0672043,.21639785){186}{\line(0,1){.21639785}}
\multiput(49,46.75)(-.0672043,.21639785){186}{\line(0,1){.21639785}}
\multiput(22.25,47.5)(-.0672043,-.21639785){186}{\line(0,-1){.21639785}}
\multiput(22.25,47.25)(.0672043,-.21639785){186}{\line(0,-1){.21639785}}
\multiput(49.25,47.5)(.067375887,.092789598){423}{\line(0,1){.092789598}}
\put(36.5,86.25){\circle*{2}}
\put(122.25,86.5){\circle*{2}}
\put(140.25,68){\circle*{2}}
\put(146,48){\circle*{2}}
\put(117.25,8.75){\circle*{2}}
\put(101.75,48){\circle*{2}}
\put(69,20.25){\circle*{2}}
\put(76,20.25){\circle*{2}}
\put(83,20.25){\circle*{2}}
\put(129.5,33.25){\circle*{2}}
\put(127.25,30.25){\circle*{2}}
\put(125,27.25){\circle*{2}}
\put(49.25,86.25){\circle*{2}}
\put(61.75,86.25){\circle*{2}}
\put(78.25,86.25){\circle*{2}}
\put(49.25,47){\circle*{2}}
\put(22.25,47){\circle*{2}}
\put(35,7){\circle*{2}}
\put(22.75,7.25){\circle*{2}}
\put(9.75,7){\circle*{2}}
\multiput(102.25,49)(.067434211,.124177632){304}{\line(0,1){.124177632}}
\multiput(102,48.25)(.125,.067434211){304}{\line(1,0){.125}}
\multiput(101.75,48.25)(.06730769,-.17200855){234}{\line(0,-1){.17200855}}
\put(49.5,43){$4$}
\put(96.5,50.75){$n$}
\put(27.75,43.5){$3$}
\put(9,1.5){$a_3^1$}
\put(22.5,1.5){$a_3^2$}
\put(36,1.75){$a_3^3$}
\put(35,88.5){$a_4^1$}
\put(47.5,89.75){$a_4^2$}
\put(61.75,89.25){$a_4^3$}
\put(78.5,89){$a_4^4$}
\put(121.5,88.5){$a_n^1$}
\put(143.25,70){$a_n^2$}
\put(150.25,48){$a_n^3$}
\put(118.75,5.25){$a_n^n$}
\thinlines
\put(145.5,47.75){\line(-1,0){123.25}}
\put(18.5,56.5){\circle{19.81}}
\end{picture}
\end{center}
Using a similar method described in Lemma~\ref{salam70} we have $P_h(Y_n)=\{4n-9,4n-8,4n-7,\ldots\}\cup\{+\infty\}$, which leads to the desired result.
\end{proof}
\begin{lemma}\label{salam90}
For all $p\geq0$ with $p\equiv1(\mod 4)$ there exists graph $X$ with $P_h(X)=\{p,p+1,\ldots\}\cup\{+\infty\}$.
\end{lemma}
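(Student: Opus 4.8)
The plan is to follow the proof of Lemma~\ref{salam70} almost verbatim, altering only the base graph so that its full homeomorphism group yields exactly one fewer orbit closure than on $X_n$; this lowers the minimal height from $4n-10$ to $4n-11\equiv1\pmod4$. Throughout I use the fact, implicit in the earlier proofs, that $h(Homeo(W),W)=|\{\overline{Homeo(W)x}:x\in W\}|-1$, so it suffices to delete a single orbit closure from the count $4n-9$ obtained in Lemma~\ref{salam70}.

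For $n\geq4$ I would let $Z_n$ be the graph obtained from $X_n$ by replacing the star at the vertex $n$ with a circle through $n$ (a loop at $n$), while keeping the stars at $3,\ldots,n-1$; in particular the $3$-edge star at vertex $3$ is retained. The essential point is a local orbit computation: a loop at a vertex $v$ yields only the two orbit closures $\{v\}$ and the whole loop, because the loop is the unique cycle of $Z_n$ and $v$ is its only branch point, so every homeomorphism fixes $v$, maps the loop to itself, and acts transitively on the open arc $\mathrm{loop}\setminus\{v\}$ (whose closure is the entire loop); by contrast a star contributes three orbit closures (centre, leaf set, union of edges). The remaining spine vertices $3,\ldots,n-1$ have pairwise distinct local branch data and so stay fixed as well. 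Hence $Z_n$ has $(4n-9)-1=4n-10$ orbit closures, $h(Homeo(Z_n),Z_n)=4n-11$, and Lemmas~\ref{salam20} and~\ref{salam60} give
\[4n-11,\ +\infty\in P_h(Z_n)\subseteq\{4n-11,4n-10,\ldots\}\cup\{+\infty\}.\]

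To produce every intermediate height I would reuse the device of Lemma~\ref{salam70} unchanged: writing $\ast=[3,a_3^1]\cup[3,a_3^2]\cup[3,a_3^3]$ for the retained $3$-edge star at vertex $3$, for each $t\geq2$ choose $K\subseteq Homeo(\ast)$ with $h(K,\ast)=t$ by Theorem~\ref{salam50} (so that $K3=\{3\}$) and set $G=\{f\in Homeo(Z_n):f\restriction_\ast\in K\}$. Exactly as before, the three orbit closures supported on $\ast$ are replaced by the $t+1$ orbit closures of $(K,\ast)$, so the total becomes $(4n-10)-3+(t+1)=4n+t-12$ and $h(G,Z_n)=4n+t-13$; letting $t$ run over $\{2,3,\ldots\}$ sweeps out every integer $\geq4n-11$. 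Thus $P_h(Z_n)=\{4n-11,4n-10,\ldots\}\cup\{+\infty\}$, and letting $n$ run over $\{4,5,6,\ldots\}$ realises every $p\equiv1\pmod4$ with $p\geq5$. The single remaining value $p=1$ is supplied by the unit interval, since $P_h([0,1])=\{1,2,\ldots\}\cup\{+\infty\}$ by Theorem~\ref{salam30}.

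The step I expect to be most delicate is the orbit-closure bookkeeping for $Z_n$: one must verify that the loop really costs exactly one orbit closure, that its branch vertex stays fixed under $Homeo(Z_n)$ so that no two spine vertices collapse into a common orbit, and that the degenerate case $n=3$ — in which deleting the last star turns $Z_n$ into a bare circle with height $0$ — is correctly excluded and instead handled by $[0,1]$. Once this count is pinned down, the filling-in argument is routine, being identical to that of Lemma~\ref{salam70}.
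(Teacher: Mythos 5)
Your proposal is correct and takes essentially the same route as the paper: modify the graph $X_n$ of Lemma~\ref{salam70} by a loop so that the orbit-closure count shifts into the right residue class, keep the $3$-star at a vertex so that Theorem~\ref{salam50} supplies every intermediate height, and cover the leftover value $p=1$ by $[0,1]$ via Theorem~\ref{salam30}. The only (cosmetic) difference is the placement of the loop --- the paper's $Z_n$ attaches a pendant vertex carrying a loop to vertex $3$ of $X_n$, adding three orbit closures to reach minimal height $4n-7$, while you replace the $n$-star by a loop at $n$, removing one orbit closure to reach $4n-11$; both land on $\{5,9,13,\ldots\}$ and the bookkeeping is identical.
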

\begin{proof}
For $n\geq3$ suppose $Z_n$ is the following graph:
\vspace{3mm}
\begin{center}
\unitlength .4mm 
\linethickness{0.4pt}
\ifx\plotpoint\undefined\newsavebox{\plotpoint}\fi 
\begin{picture}(185.75,92)(0,0)
\thicklines
\put(83,50.25){\line(0,1){39.75}}
\put(56.25,50.25){\line(0,-1){39.75}}
\multiput(83,50)(.0672043,.21639785){186}{\line(0,1){.21639785}}
\multiput(83,49.75)(-.0672043,.21639785){186}{\line(0,1){.21639785}}
\multiput(56.25,50.5)(-.0672043,-.21639785){186}{\line(0,-1){.21639785}}
\multiput(56.25,50.25)(.0672043,-.21639785){186}{\line(0,-1){.21639785}}
\multiput(83.25,50.5)(.067375887,.092789598){423}{\line(0,1){.092789598}}
\put(70.5,89.25){\circle*{2}}
\put(156.25,89.5){\circle*{2}}
\put(174.25,71){\circle*{2}}
\put(180,51){\circle*{2}}
\put(151.25,11.75){\circle*{2}}
\put(135.75,51){\circle*{2}}
\put(103,23.25){\circle*{2}}
\put(110,23.25){\circle*{2}}
\put(117,23.25){\circle*{2}}
\put(163.5,36.25){\circle*{2}}
\put(161.25,33.25){\circle*{2}}
\put(159,30.25){\circle*{2}}
\put(83.25,89.25){\circle*{2}}
\put(95.75,89.25){\circle*{2}}
\put(112.25,89.25){\circle*{2}}
\put(83.25,50){\circle*{2}}
\put(28.5,50){\circle*{2}}
\put(56.25,50){\circle*{2}}
\put(69,10){\circle*{2}}
\put(56.75,10.25){\circle*{2}}
\put(43.75,10){\circle*{2}}
\multiput(136.25,52)(.067434211,.124177632){304}{\line(0,1){.124177632}}
\multiput(136,51.25)(.125,.067434211){304}{\line(1,0){.125}}
\multiput(135.75,51.25)(.06730769,-.17200855){234}{\line(0,-1){.17200855}}
\thinlines
\multiput(179.75,51)(-18.96875,-.0625){8}{\line(-1,0){18.96875}}
\put(15.5,48.75){\circle{25.54}}
\put(55,54.25){$3$}
\put(83.25,46){$4$}
\put(131.5,55){$n$}
\put(159,92){$a^1_n$}
\put(173.5,76){$a^2_n$}
\put(185.75,51){$a^3_n$}
\put(151.25,7.25){$a^n_n$}
\end{picture}
\end{center}
Then $P_h(Z_n)=\{4n-7,4n-6,4n-5,\ldots\}\cup\{+\infty\}$, which leads to the desired result by
Theorem~\ref{salam30}.
\end{proof}
\begin{lemma}\label{salam95}
For all $p\geq0$ with $p\equiv 0(\mod 4)$ there exists graph $X$ with $P_h(X)=\{p,p+1,\ldots\}\cup\{+\infty\}$.
\end{lemma}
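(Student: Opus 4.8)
The plan is to settle the residue class $p\equiv0\pmod 4$ by the same mechanism used in Lemmas~\ref{salam70}, \ref{salam80} and~\ref{salam90}, splitting off the two smallest values. Writing $p=4m$, the case $m=0$ is already done by the circle, since Theorem~\ref{salam35} gives $P_h(\mathbb{S}^1)=\{0,1,2,\ldots\}\cup\{+\infty\}$, so $\min P_h(\mathbb{S}^1)=0$. For $m\geq2$ it suffices to exhibit, for every $n\geq4$, a graph $V_n$ with $h(Homeo(V_n),V_n)=4n-8$ and $P_h(V_n)=\{4n-8,4n-7,\ldots\}\cup\{+\infty\}$; then $n=m+2$ realizes $p=4m$, and letting $n$ run over $\{4,5,6,\ldots\}$ sweeps out every $p\equiv0\pmod 4$ with $p\geq8$. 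The remaining value $p=4$ is handled by a separate, symmetric graph.

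To build $V_n$ I would start from the graph $X_n$ of Lemma~\ref{salam70}, whose chain vertices $3,4,\ldots,n$ are pairwise distinguished (they carry different numbers of pendant edges) and hence individually fixed by $Homeo(X_n)$, so that $\{\overline{Homeo(X_n)x}:x\in X_n\}$ has $4n-9$ members. I would then attach one loop (a circle meeting the graph in a single point) at vertex $3$ and one loop at vertex $4$. Because these two vertices stay fixed and have different degrees, no homeomorphism can interchange the two loops, so each loop interior is its own orbit closure; this adds exactly two members, giving $4n-7$ orbit closures and $h(Homeo(V_n),V_n)=4n-8\equiv0\pmod 4$. Lemmas~\ref{salam20} and~\ref{salam60} then give $+\infty\in P_h(V_n)$ and $P_h(V_n)\subseteq\{4n-8,4n-7,\ldots\}\cup\{+\infty\}$, and every intermediate integer is realized exactly as in Lemma~\ref{salam70}: for $t\geq2$ Theorem~\ref{salam50} provides $K\subseteq Homeo(\ast_3)$ with $K3=\{3\}$ and $h(K,\ast_3)=t$ on the star $\ast_3=[3,a_3^1]\cup[3,a_3^2]\cup[3,a_3^3]$ at vertex $3$, and $G=\{f\in Homeo(V_n):f\restriction_{\ast_3}\in K\}$ replaces the three $\ast_3$-orbit closures by the $t+1$ orbit closures of $K$, yielding $4n-7+(t-2)=4n+t-9$ orbit closures and $h(G,V_n)=4n+t-10$. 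As $t$ ranges over $\{2,3,\ldots\}$ this realizes every integer $\geq4n-8$.

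For $p=4$ the template fails, and this is the main obstacle. A graph with a single branch vertex admits at most four $Homeo$-orbit closures (the vertex, the leaf set, the pendant interiors, and the merged interiors of all loops, which are mutually interchangeable), so base height $4$ cannot be reached with only one star vertex; on the other hand the cheapest genuinely new, distinguishable branch point costs three extra orbit closures (as in $Z_3$ of Lemma~\ref{salam80}) and overshoots to base height $5$. I would therefore take the symmetric graph $D$ obtained by joining two copies of $\ast_3$-with-a-loop by a single edge $[u,b]$, arranged so that the reflection interchanging the two ends is a homeomorphism. The swap merges the would-be six orbit closures into the five sets $\{u,b\}$, the six leaves, the six pendant interiors, the two loop interiors, and the interior of $[u,b]$, giving $h(Homeo(D),D)=4$. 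Filling in $\{5,6,\ldots\}$ again uses Theorem~\ref{salam50}, but now one must choose subgroups that break the reflection and refine a single star at $u$ while splitting exactly one orbit closure per step, so that no intermediate height is skipped; verifying this, together with the orbit-closure counts for $V_n$ and $D$, is where the real work lies. Combining the circle for $p=0$, the graphs $V_n$ for $p\geq8$, and $D$ for $p=4$ gives the desired conclusion for all $p\equiv0\pmod 4$.
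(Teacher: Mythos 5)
For $p=0$ and for $p\geq 8$ your construction coincides with the paper's: the circle handles $p=0$ via Theorem~\ref{salam35}, and your $V_n$ is exactly the paper's graph $W_n$ (the chain graph $X_n$ of Lemma~\ref{salam70} with one loop attached at vertex $3$ and one at vertex $4$), with the same orbit-closure count $4n-7$, the same base height $4n-8$, and the same filling-in of the intermediate heights by applying Theorem~\ref{salam50} to the star at vertex $3$. That part is fine and matches the paper.

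The gap is at $p=4$. Your graph $D$ (two copies of $\ast_3$-with-a-loop joined by an edge $[u,b]$, with the swap of the two ends realized by a homeomorphism) does have exactly five $Homeo(D)$-orbit closures and hence $h(Homeo(D),D)=4$; but the claim $P_h(D)=\{4,5,\ldots\}\cup\{+\infty\}$ requires exhibiting subgroups of every intermediate height, and this is precisely what you do not do. Worse, the mechanism you hint at --- ``break the reflection and refine a single star at $u$'' --- cannot proceed one step at a time: any subgroup of $Homeo(D)$ all of whose elements fix $u$ also fixes $b$ and therefore separates the two loops, the two leaf-triples and the two pendant-edge unions simultaneously, so the orbit-closure count jumps from $5$ to at least $9$ and the height from $4$ to at least $8$, skipping $5,6,7$. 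To realize those values one must \emph{keep} the reflection and impose reflection-invariant extra structure, e.g.\ a mirror-symmetric pair of $\mathbb{Z}$-indexed sequences, one in a pendant edge at $u$ and its image at $b$, exactly as in the group $H$ of Theorem~\ref{salam50} (each such pair adds exactly one orbit closure), or a symmetric subdivision of $[u,b]$ as in Case 2 of Theorem~\ref{taha30}. For what it is worth, your diagnosis that $p=4$ is the hard residue is sharper than the paper's own treatment: the paper disposes of $p=4$ in one sentence with the graph $\bigcirc\!\frac{\;\;\;}{}$, a circle with one pendant arc, which has a single branch vertex and therefore, by your own correct observation, only four orbit closures and base height $3$, not $4$. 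So your choice of graph is the right correction, but the verification that every height $\geq 5$ is attained on it is missing, and the completion strategy you sketch would fail as stated.
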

\begin{proof}
For $n\geq4$ suppose $W_n$ is the following graph:
\vspace{3mm}
\begin{center}
\unitlength .4mm 
\linethickness{0.4pt}
\ifx\plotpoint\undefined\newsavebox{\plotpoint}\fi 
\begin{picture}(195.5,88.5)(0,0)
\thicklines
\put(49,47.25){\line(0,1){39.75}}
\put(76.25,47.75){\line(0,-1){39.75}}
\put(22.25,47.25){\line(0,-1){39.75}}
\multiput(49,47)(.0672043,.21639785){186}{\line(0,1){.21639785}}
\multiput(76.25,48)(-.0672043,-.21639785){186}{\line(0,-1){.21639785}}
\multiput(49,46.75)(-.0672043,.21639785){186}{\line(0,1){.21639785}}
\multiput(76.25,48.25)(.0672043,-.21639785){186}{\line(0,-1){.21639785}}
\multiput(22.25,47.5)(-.0672043,-.21639785){186}{\line(0,-1){.21639785}}
\multiput(22.25,47.25)(.0672043,-.21639785){186}{\line(0,-1){.21639785}}
\multiput(49.25,47.5)(.067375887,.092789598){423}{\line(0,1){.092789598}}
\multiput(76,47.5)(-.067375887,-.092789598){423}{\line(0,-1){.092789598}}
\put(36.5,86.25){\circle*{2}}
\put(88.75,8.75){\circle*{2}}
\put(167.5,86.5){\circle*{2}}
\put(185.5,68){\circle*{2}}
\put(191.25,48){\circle*{2}}
\put(162.5,8.75){\circle*{2}}
\put(147,48){\circle*{2}}
\put(114.25,20.25){\circle*{2}}
\put(121.25,20.25){\circle*{2}}
\put(128.25,20.25){\circle*{2}}
\put(174.75,33.25){\circle*{2}}
\put(172.5,30.25){\circle*{2}}
\put(170.25,27.25){\circle*{2}}
\put(49.25,86.25){\circle*{2}}
\put(76,8.75){\circle*{2}}
\put(61.75,86.25){\circle*{2}}
\put(63.5,8.75){\circle*{2}}
\put(78.25,86.25){\circle*{2}}
\put(76,47.5){\circle*{2}}
\put(108.25,9.75){\circle*{2}}
\put(47,8.75){\circle*{2}}
\put(49.25,47){\circle*{2}}
\put(22.25,47){\circle*{2}}
\put(35,7){\circle*{2}}
\put(22.75,7.25){\circle*{2}}
\put(9.75,7){\circle*{2}}
\multiput(147.5,49)(.067434211,.124177632){304}{\line(0,1){.124177632}}
\multiput(147.25,48.25)(.125,.067434211){304}{\line(1,0){.125}}
\multiput(147,48.25)(.06730769,-.17200855){234}{\line(0,-1){.17200855}}
\put(141.75,50.75){$n$}
\put(166.75,88.5){$a_n^1$}
\put(188.5,70){$a_n^2$}
\put(195.5,48){$a_n^3$}
\put(164,5.25){$a_n^n$}
\thinlines
\put(18.25,55.5){\circle{18.06}}
\put(48.75,37.75){\circle{18.06}}
\put(191,47.75){\line(-1,0){169}}
\multiput(76,48)(.067468619,-.078974895){478}{\line(0,-1){.078974895}}
\put(79.25,51.25){$5$}
\put(41.75,52){$4$}
\put(27,42.75){$3$}
\end{picture}
\end{center}
Then $P_h(W_n)=\{4n-8,4n-7,4n-6,\ldots\}\cup\{+\infty\}$. Also suppose $W$ is the graph $\bigcirc\!\frac{\:\:\:}{}$, then $P_h(W)=\{4,5,6,\ldots\}\cup\{+\infty\}$, which completes the proof by Theorem~\ref{salam35}.
\end{proof}
\begin{theorem}\label{main}
For all $p\geq0$ there exists graph $\mathcal{G}$ with $P_h(\mathcal{G})=\{p,p+1,\ldots\}\cup\{+\infty\}$.
\end{theorem}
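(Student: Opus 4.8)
The plan is to treat this as an immediate corollary of the four preceding lemmas, which between them already cover every residue class modulo $4$. First I would fix an arbitrary $p\geq0$ and observe that $p$ belongs to exactly one of the classes $0,1,2,3\pmod4$. Then I would split into four cases and quote the matching lemma: if $p\equiv0\pmod4$ apply Lemma~\ref{salam95}; if $p\equiv1\pmod4$ apply Lemma~\ref{salam90}; if $p\equiv2\pmod4$ apply Lemma~\ref{salam70}; and if $p\equiv3\pmod4$ apply Lemma~\ref{salam80}. In each case the cited lemma produces a graph $X$ with $P_h(X)=\{p,p+1,\ldots\}\cup\{+\infty\}$, and this $X$ is the desired $\mathcal{G}$.

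The one point I would verify with care --- and where all the genuine content sits --- is that each of the four lemmas really does realize \emph{every} admissible $p$ in its residue class, including the small boundary values not reached by the infinite families. The families $X_n,Y_n,Z_n,W_n$ have starting heights $4n-10,\,4n-9,\,4n-7,\,4n-8$ respectively, which increase in steps of $4$ as $n$ grows, so each family fills out an arithmetic progression with common difference $4$ inside its residue class; the smallest values are then supplied by the base examples already established, namely $p=0$ by the circle ${\mathbb S}^1$ (Theorem~\ref{salam35}), $p=1$ by the unit interval $[0,1]$ (Theorem~\ref{salam30}), and $p=4$ by the graph $\bigcirc\!\frac{\:\:\:}{}$ appearing in Lemma~\ref{salam95}. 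Checking that these base cases plug exactly the gaps left by the families $X_n$ (from $n\geq3$), $Y_n$ (from $n\geq3$), $Z_n$ (from $n\geq3$), and $W_n$ (from $n\geq4$) confirms that the union of the four lemmas is all of $\{0,1,2,\ldots\}$.

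Consequently there is no real obstacle to overcome at this stage: the substantive constructions and height computations were carried out in Lemmas~\ref{salam70}--\ref{salam95}, and the theorem follows purely by the bookkeeping of residue classes. The proof is therefore a short finite case analysis rather than a new argument, and I would write it simply as the four-way split described above, invoking the appropriate lemma in each branch.
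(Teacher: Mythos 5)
Your proposal is correct and is essentially identical to the paper's proof, which simply invokes Lemmas~\ref{salam70}, \ref{salam80}, \ref{salam90} and \ref{salam95} to cover the four residue classes modulo $4$. Your extra bookkeeping about the boundary values ($p=0$ via ${\mathbb S}^1$, $p=1$ via $[0,1]$, $p=4$ via the graph $W$) correctly identifies where those lemmas rely on the earlier base examples, so nothing is missing.
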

\begin{proof}
Use Lemmas~\ref{salam70}, \ref{salam80}, \ref{salam90} and \ref{salam95}.
\end{proof}
\begin{counterexample}\label{taha10}
The unit interval $[0,1]$ is a subgraph of unit circle $\mathbb{S}^1$ and star graph with three edges
$\ast_3$, however
$P_h([0,1])\nsubseteq P_h(\ast_3)$ and
$P_h(\mathbb{S}^1)\nsubseteq P_h([0,1])$.
\end{counterexample}
\begin{theorem}\label{taha20}
In the graph $X$, the following statements are equivalent:
\begin{itemize}
\item[1.] $X$ is $\mathbb{S}^1$ (i.e., $X$ and ${\mathbb S}^1$ are homeomorphic spaces),
\item[2.] $P_h(X)=\{0,1,\ldots\}\cup\{+\infty\}$,
\item[3.] $0\in P_h(X)$,
\item[4.] all vertices of $X$ have degree $2$.
\end{itemize}
\end{theorem}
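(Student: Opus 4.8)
The plan is to prove the cyclic chain of implications $(1)\Rightarrow(2)\Rightarrow(3)\Rightarrow(4)\Rightarrow(1)$. The first implication is immediate once one observes that $P_h$ is a topological invariant: a homeomorphism $\varphi\colon X\to\mathbb{S}^1$ induces the group isomorphism $g\mapsto\varphi g\varphi^{-1}$ of $Homeo(X)$ onto $Homeo(\mathbb{S}^1)$, which carries closed invariant subsets to closed invariant subsets and hence preserves all heights, so $P_h(X)=P_h(\mathbb{S}^1)$; then Theorem~\ref{salam35} gives $P_h(X)=\{0,1,2,\ldots\}\cup\{+\infty\}$. The implication $(2)\Rightarrow(3)$ is trivial, since $0\in\{0,1,2,\ldots\}\cup\{+\infty\}$.

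For $(3)\Rightarrow(4)$, I would first note that $0\in P_h(X)$ means exactly that there is a transformation group $(G,X)$ with $h(G,X)=0$, i.e. $X$ has no proper (nonempty) closed invariant subset, so $(G,X)$ is minimal. The key structural fact is that the order of a point of a topological graph is a topological invariant, where the endpoints are the points of order $1$, the branch points those of order $\geq3$, and the ordinary points those of order $2$; thus every homeomorphism $f\colon X\to X$ satisfies $\mathrm{ord}(f(x))=\mathrm{ord}(x)$. Consequently the set $B:=\{x\in X:\mathrm{ord}(x)\neq2\}$ of all endpoints and branch points is invariant under $Homeo(X)$, hence under $G$. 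In a topological graph $B$ is finite, and therefore closed; moreover $X$ is infinite, so $B$ is proper. If $B\neq\varnothing$ it would be a nonempty proper closed invariant subset of $(G,X)$, forcing $h(G,X)\geq1$ and contradicting minimality. Hence $B=\varnothing$, i.e. every vertex of $X$ has degree $2$.

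Finally, for $(4)\Rightarrow(1)$, if every point of $X$ has order $2$ then $X$ has neither endpoints nor branch points, so each point possesses a neighborhood homeomorphic to an open interval; thus $X$ is a compact connected $1$-manifold without boundary, and by the classification of $1$-manifolds $X\cong\mathbb{S}^1$ (equivalently, in any graph representation a connected finite graph all of whose vertices have degree $2$ is a single cycle, which is homeomorphic to $\mathbb{S}^1$). I expect the main obstacle to be the step $(3)\Rightarrow(4)$: its whole force rests on the facts that order is preserved by every homeomorphism and that the non-ordinary points constitute a finite — hence closed — invariant set, so the delicate part is to justify carefully that $B$ is finite, closed, invariant, and proper, after which minimality yields $B=\varnothing$ at once. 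The classification used in $(4)\Rightarrow(1)$ is classical and should cause no trouble once the order-$2$ condition has been translated into the local manifold condition.
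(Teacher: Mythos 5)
Your proposal is correct and follows essentially the same route as the paper: the cycle $(1)\Rightarrow(2)\Rightarrow(3)\Rightarrow(4)\Rightarrow(1)$, with $(1)\Rightarrow(2)$ from Theorem~\ref{salam35}, $(4)\Rightarrow(1)$ from the classification of graphs with all vertices of degree $2$ (the paper cites Nadler), and $(3)\Rightarrow(4)$ proved contrapositively by observing that the finite, hence closed, set of non-ordinary points is invariant under every homeomorphism and so forces height at least $1$ when nonempty. Your version merely spells out in more detail why that set is finite, closed, invariant and proper, which the paper asserts more tersely.
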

\begin{proof}
Using Theorem~\ref{salam35}, we have ``(1) $\Rightarrow$ (2)''. Moreover by \cite[Corollary 9.6]{nadler}, we have ``(4) $\Rightarrow$ (1)''. We should just prove ``(3) $\Rightarrow$ (1)''. Suppose $D$ denotes the collection of all vertices of $X$ with degree else $2$, moreover suppose $D\neq\varnothing$, then for all group $G$ in transformation group $(G,X)$, $D\subset X$ are two invariant closed subsets of $X$, thus $h(G,X)\geq1$ and $0\notin P_h(X)$.
\end{proof}
\begin{theorem}\label{taha30}
For graph $X$ we have $h(Homeo(X),X)<+\infty$ and $P_h(X)=\{h(Homeo(X),X),h(Homeo(X),X)+1,\ldots\}\cup\{+\infty\}$.
\end{theorem}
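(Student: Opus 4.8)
The plan is to reduce everything to a single combinatorial invariant: for a subgroup $G\le Homeo(X)$ with only finitely many orbit closures, every closed invariant subset equals $\bigcup_{y\in Y}\overline{Gy}$, so the closed invariant subsets are exactly the finite unions of orbit closures, and these are order–isomorphic to the down–closed subsets of the finite poset $(\{\overline{Gx}:x\in X\},\subseteq)$. Taking a linear extension of this poset builds a maximal chain, while each strict inclusion of closed invariant sets adds at least one orbit closure, so a longest chain has length $|\{\overline{Gx}:x\in X\}|-1$ and
\[h(G,X)=|\{\overline{Gx}:x\in X\}|-1.\]
This is the identity implicitly used in every computation of the earlier sections, and I would first record it as a lemma.

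First I would prove $h(Homeo(X),X)<+\infty$. Give $X$ its canonical structure: let $R$ be the finite set of points of degree $\ne 2$. If $R=\varnothing$ then $X$ is $\mathbb{S}^1$ and the claim is Theorem~\ref{salam35}; otherwise the components of $X\setminus R$ are finitely many open edges. Every element of $Homeo(X)$ preserves $R$ (degree is a topological invariant) and permutes the open edges, while a homeomorphism supported on the closure of a single edge and fixing its endpoints already carries any interior point of that edge to any other. Hence there are at most $|R|+(\text{number of edges})$ orbit closures, so by the identity above $m:=h(Homeo(X),X)$ is finite. For the shape of $P_h(X)$, the inclusion $P_h(X)\subseteq\{m,m+1,\ldots\}\cup\{+\infty\}$ is exactly Lemma~\ref{salam60}, while $m\in P_h(X)$ is witnessed by $G=Homeo(X)$ and $+\infty\in P_h(X)$ follows from Lemma~\ref{salam20}, since a graph is infinite. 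It therefore remains to realize every integer $k$ with $m<k<+\infty$.

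To produce these intermediate heights I would use a single \emph{increment move} that raises the number of orbit closures by exactly one. Assuming $X$ is not $\mathbb{S}^1$, fix an edge $e$ and identify its interior with $(0,1)$; under $Homeo(X)$ the whole of $e^\circ$ lies in one orbit closure $O$. Choosing a two–sided sequence $\{y_k\}_{k\in\mathbb{Z}}\subseteq e^\circ$ that accumulates only at already–distinguished points and passing to $G'=\{f\in Homeo(X):f(\{y_k\})=\{y_k\}\}$ should replace the single orbit closure $O$ by the two orbit closures $O$ and $\overline{\{y_k\}}$, leaving all others untouched; by the identity this gives $h(G',X)=m+1$. Iterating the move in disjoint, rigidly ordered slots of $e$ (so the inserted closures cannot be permuted with one another) is intended to yield subgroups of every height $m+1,m+2,\ldots$; alternatively one may run the symmetric construction of Theorem~\ref{salam30} on $e$, or the sub–star construction of Theorem~\ref{salam50} at a branch point, which by the same identity produces a family of subgroups whose heights form a final segment with unit steps. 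Combined with the inclusion from Lemma~\ref{salam60} this forces $P_h(X)=\{m,m+1,\ldots\}\cup\{+\infty\}$.

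The hard part is the exact bookkeeping in this last step: I must guarantee that each inserted sequence adds \emph{precisely} one orbit closure and disturbs nothing else. The two dangers are that the accumulation points of an inserted sequence become new fixed points and split a vertex– or edge–orbit (to be avoided by accumulating only at points that are already singleton orbits), and that symmetry of $X$ lets a homeomorphism interchange two inserted closures or map $e$ onto a sibling edge (to be avoided by constraining the relevant sets setwise and placing the slots in a fixed linear order along $e$). Verifying that the modified subgroup is still rich enough to keep every remaining orbit closure equal to its $Homeo(X)$–value — so that the $s$–fold move gives count exactly $m+s$ — is where the real work lies. The circle case $m=0$ and the endpoint values $m$ and $+\infty$ are already handled by Theorem~\ref{salam35} and Lemmas~\ref{salam20} and~\ref{salam60}.
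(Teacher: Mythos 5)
Your overall architecture matches the paper's: both reduce the problem to the identity $h(G,X)=|\{\overline{Gx}:x\in X\}|-1$ for finitely many orbit closures, both get finiteness of $h(Homeo(X),X)$ by bounding the orbit closures of $Homeo(X)$ by vertex-orbits plus edge-orbits, and both get the inclusion $P_h(X)\subseteq\{m,m+1,\ldots\}\cup\{+\infty\}$ from Lemma~\ref{salam60} together with Lemma~\ref{salam20}. The gap is in your ``increment move,'' which is the entire remaining content of the theorem. If the edge $e$ is not rigid under $Homeo(X)$ --- say $\overline{Homeo(X)w}=[a_1,b_1]\cup\cdots\cup[a_n,b_n]$ with $n\geq 2$ and $e=[a_1,b_1]$ --- then placing a marked sequence $\{y_k\}$ inside $e^\circ$ alone and passing to $G'=\{f:f(\{y_k\})=\{y_k\}\}$ forces every $f\in G'$ to fix $e$ setwise (since $f(\{y_k\})$ must meet $\{y_k\}\subseteq e^\circ$ and edge interiors are disjoint). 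This splits the single orbit closure $O$ into at least three pieces ($\overline{\{y_k\}}$, $[a_1,b_1]$, and $[a_2,b_2]\cup\cdots\cup[a_n,b_n]$) and also splits the vertex orbits at $a_1,b_1$ away from their siblings, so the move does not add ``exactly one'' orbit closure and the count $m+s$ after $s$ iterations is not established. Your own last paragraph flags exactly this as ``where the real work lies,'' but that work is the theorem; in a graph such as $\ast_3$ every edge has nontrivial orbit, so no choice of $e$ evades the problem.

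The paper's proof resolves this by marking points \emph{symmetrically in every edge of the orbit} of $e$ and requiring $f$ only to permute the marked configurations among the $n$ edges coherently, so that the edge-orbit and the vertex-orbits are not disturbed; this is the idea your sketch is missing. It also requires a case distinction you do not anticipate: whether $Homeo(X)a_1=\{a_1,\ldots,a_n\}$ and $Homeo(X)b_1=\{b_1,\ldots,b_n\}$, or a homeomorphism can reverse the edge so that $Homeo(X)a_1=\{a_1,\ldots,a_n,b_1,\ldots,b_n\}$. In the first case the paper's two constructions raise the height by $2m-2$ and by $2m-1$ (the even and odd increments together covering all of $\{0,1,2,\ldots\}$); in the second, the marked points must be placed compatibly with the reversal (as in Theorem~\ref{salam30}) and the increment is $m-1$. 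Your alternative suggestion of running the constructions of Theorems~\ref{salam30} or~\ref{salam50} ``on $e$'' points in the right direction but, applied to a single edge rather than to the whole edge-orbit with the correct symmetry constraint, runs into the same splitting problem; without carrying out that bookkeeping the proof is incomplete.
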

\begin{proof}
It's evident that for $x\in(a,b)\subset X$ we have $[a,b]\subseteq \overline{Homeo(X)x}$, suppose $X$ is not circle and $V$ is the collection of its vertices, also $E$ is the collection of its edges. Then $\{\overline{Homeo(X)x}:x\in X\}\subseteq\{\bigcup \Lambda:\varnothing\neq\Lambda\subseteq V\}\cup \{\bigcup \Lambda:\varnothing\neq\Lambda\subseteq E\}$, so $h(Homeo(X),X)\leq 2^{|V|}+2^{|E|}-3<+\infty$.
\\
If $X$ is a circle, then the result is clear by Theorem~\ref{salam35}, so $X$ is not a circle and has an edge like $[a_1,b_1]$ such that $a_1$ and $b_1$ are vertices with degrees else than $2$. Choose $w\in(a_1,b_1)$. There exists $n\geq1$ and distinct edges $[a_1,b_1],\ldots,[a_n,b_n]$ such that $\overline{Homeo(X)w}=[a_1,b_1]\cup\cdots\cup[a_n,b_n]$ and $a_i,b_i$ s are vertices with degree else than $2$.
\\
For $i\in\{1,\ldots,n\}$ since $Homeo(X)w\cap [a_i,b_i]\neq\varnothing$ there exists homeomorphism $f:X\to X$ with $f(w)\in[a_i,b_i]$ and in particular (considering the degrees of vertices) $f[a_1,b_1]=[a_i,b_i]$ so $f(a_1),f(b_1)\in\{a_i,b_i\}$. As a matter of fact for all $h\in Homeo(X)$ there exists $j$ with $h(w)\in[a_j,b_j]$, so $h(a_1),h(b_1)\in\{a_j,b_j\}$, hence we have $h(a_1),h(b_1)\in\{a_1,\ldots,a_n,b_1,\ldots,b_n\}$.  Without any loss of generality we may
suppose there exists $h\in Homeo(X)$ with not only $h([a_1,b_1])=[a_i,b_i]$ but also $h(a_1)=a_i$ (may be you need to change the role of $a_i,b_i$) thus $h(b_1)=b_i$. Therefore:
\[\{a_1,\ldots,a_n\}\subseteq Homeo(X)a_1\subseteq\{a_1,\ldots,a_n,b_1,\ldots,b_n\}\tag{A}\]
and
\[\{b_1,\ldots,b_n\}\subseteq Homeo(X)b_1\subseteq\{a_1,\ldots,a_n,b_1,\ldots,b_n\}\:.\tag{B}\]
Note to the fact that for all $x,y\in X$ we have $Homeo(X)x\cap Homeo(X)y\neq\varnothing$ if and only if $Homeo(X)x= Homeo(X)y$, hence by (A) and (B) exactly one of the following cases occurs:
\begin{itemize}
\item $\{a_1,\ldots,a_n\}=Homeo(X)a_1$ and $\{b_1,\ldots,b_n\}= Homeo(X)b_1$,
\item $Homeo(X)a_1=Homeo(X)b_1=\{a_1,\ldots,a_n,b_1,\ldots,b_n\}$.
\end{itemize}
We have the following cases:
\\
{\it Case 1: $Homeo(X)a_1=\{a_1,\ldots,a_n\}$ and $Homeo(X)b_1=\{b_1,\ldots,b_n\}$
(so for all $i\in\{1,\ldots,n\}$, $Homeo(X)a_i=\{a_1,\ldots,a_n\}$ and $Homeo(X)b_i=\{b_1,\ldots,b_n\}$).} Using a similar method described in Theorem~\ref{salam50}, for $m\geq1$
choose:
\[\begin{array}{l} a_1=x_0^1<x^1_1<\cdots<x^1_m=b_1\:, \\ \: \: \: \: \: \vdots \\ a_n=x^n_0<x^n_1<\cdots<x^n_m=b_n\:, \\
x^1_0<\cdots<y^1_{-2}<y^1_{-1}<y^1_0<y^1_1<y^1_2\cdots<x^1_1 \: , \\ \: \: \: \: \: \vdots \\
x^n_0<\cdots<y^n_{-2}<y^n_{-1}<y^n_0<y^n_1<y^n_2\cdots<x^n_1 \: , \end{array}\]
such that
${\displaystyle\lim_{k\to+\infty}y^i_k}=x^i_1$, ${\displaystyle\lim_{k\to+\infty}y^i_{-k}}=x^i_0$
for all $i\in\{1,\ldots,n\}$.
Let $H=\{f\in Homeo(X)\forall i\in\{1,\ldots,n\}\:\exists t\in\{1,\ldots,n\}\:\forall j\in\{0,\ldots,m\}\:(f(\{y^i_k:k\in{\mathbb Z}\})=\{y^t_k:k\in{\mathbb Z}\}$ and $f(x^i_j)=x^t_j)\}$, then
{\small
\begin{eqnarray*}
\{\overline{Hx}:x\in X\} & = & (
\{\overline{Homeo(X)x}:x\in X\}\setminus\{[a_1,b_1]\cup\cdots\cup[a_n,b_n]\})
\\
&& \: \: \: \: \: \cup\{
\{x^1_j,x^2_j,\ldots,x^n_j\}:0<j<m\} \\
&& \: \: \: \: \: \cup\{\{y_k^i:i\in\{1,\ldots,n\},k\in\mathbb{Z}\}\cup\{x^1_0,x^2_0,\ldots,x^n_0,x^1_1,x^2_1,\ldots,x^n_1\}\} \\
&& \: \: \: \: \: \cup\{[x_{j-1}^1,x_j^1]\cup\cdots\cup[x_{j-1}^n,x_j^n]:j\in\{1,\ldots,m\}\}\:.
\end{eqnarray*}}
Thus:
\begin{eqnarray*}
h(H,X) & = & |\{\overline{Hx}:x\in X\} | -1 \\
& = & | \{\overline{Homeo(X)x}:x\in X\}|-1-1+(m-1)+1+m \\
& = & h(Homeo(X),X)+2m-1\:.
\end{eqnarray*}
Now let $K=\{f\in Homeo(X)\forall i\in\{1,\ldots,n\}\:\forall j\in\{0,\ldots,m\}\:f(x^i_j)\in\{x^1_j,\ldots,x^n_j\}\}$, then:
\begin{eqnarray*}
\{\overline{Kx}:x\in X\} & = & (
\{\overline{Homeo(X)x}:x\in X\}\setminus\{[a_1,b_1]\cup\cdots\cup[a_n,b_n]\})
\\
&& \: \: \: \: \: \cup\{
\{x^1_j,x^2_j,\ldots,x^n_j\}:0<j<m\} \\
&& \: \: \: \: \: \cup\{[x_{j-1}^1,x_j^1]\cup\cdots\cup[x_{j-1}^n,x_j^n]:j\in\{1,\ldots,m\}\}\:.
\end{eqnarray*}
Thus:
\begin{eqnarray*}
h(K,X) & = & |\{\overline{Kx}:x\in X\} | -1 \\
& = & | \{\overline{Homeo(X)x}:x\in X\}|-1-1+(m-1)+m \\
& = & h(Homeo(X),X)+2m-2 \:.
\end{eqnarray*}
{\it Case 2: $Homeo(X)a_1=\{a_1,\ldots,a_n,b_1,\ldots,b_n\}$
(so for all $i\in\{1,\ldots,n\}$, $Homeo(X)a_i=Homeo(X)b_i=\{a_1,\ldots,a_n,b_1,\ldots,b_n\}$). }
Using a similar method described in Theorem~\ref{salam30}, for $m\geq1$
choose:
\[\begin{array}{l} a_1=x_0^1<x^1_1<\cdots<x^1_m=b_1\:, \\ \: \: \: \: \: \vdots \\ a_n=x^n_0<x^n_1<\cdots<x^n_m=b_n\:. \end{array}\]
Let $K=\{f\in Homeo(X):\forall i\in\{1,\ldots,n\}\:\forall j\in\{0,\dots,m\} f(x^i_j)\in\{x^1_{m-j},\ldots,x^n_{m-j}\}\}$, then $G=\{f_1\cdots f_s:s\geq1,f_1,\ldots,f_s\in K\}$ is a subgroup of $Homeo(X)$. Moreover:
{\small
\\
$\{\overline{Gx}:x\in X\} $
\begin{eqnarray*}
& = & (
\{\overline{Homeo(X)x}:x\in X\}\setminus\{[a_1,b_1]\cup\cdots\cup[a_n,b_n]\})
\\
&& \cup\{
\{x^1_j,x^2_j,\ldots,x^n_j,x^1_{m-j},\ldots,x^n_{m-j}\}:0<j<m\} \\
&& \cup\{[x_{j-1}^1,x_j^1]\cup\cdots\cup[x_{j-1}^n,x_j^n]\cup
[x_{m-j}^1,x_{m-(j-1)}^1]\cup\cdots\cup[x_{m-j}^n,x_{m-(j-1)}^n]:j\in\{1,\ldots,m\}\}\:.
\end{eqnarray*}}
Thus:
\begin{eqnarray*}
h(G,X) & = & |\{\overline{Gx}:x\in X\} | -1 \\
& = & | \{\overline{Homeo(X)x}:x\in X\}|-1-1+m \\
& = & h(Homeo(X),X)+m-1\:.
\end{eqnarray*}

\end{proof}
\subsection*{More questions}
On possible heights of a topological space there exist the following studies too:
\\
$\bullet$ For Alexandroff square $\mathbb A$ (see \cite{counter}) we have 
	$P_h(\mathbb{A})=\{n:n\geq5\}\cup\{+\infty\}$ \cite{punjab}.
\\
$\bullet$ For unit square $\mathbb{O}=[0,1]\times[0,1]$ equipped with lexicographic order topology we have 
	$P_h(\mathbb{O})=\{n:n\geq4\}\cup\{+\infty\}$ \cite{punjab}.
\\
$\bullet$ For unit square $\mathbb{U}=[0,1]\times[0,1]$ equipped with induced topology of Euclidean plane $\mathbb{R}^2$ 
	we have 
	$P_h(\mathbb{U})=\{n:n\geq1\}\cup\{+\infty\}$ \cite{punjab}.
\\
$\bullet$ For infinite Fort space $F$ (see \cite{counter}) we have
	$P_h(F)=\{n:n\geq1\}\cup\{+\infty\}$ and for finite Fort space $F_0$ with $k$ elements we have
	$P_h(F_0)=\{n:0\leq n\leq k\}$ (use \cite{india}).
\\
In an other point of view in topological space $X$ one may consider a nonempty subclass $\mathcal T$ of subgroups of $Homeo(X)$ and compute $\{h(G,X):X\in{\mathcal T}\}$ \cite{general}. Following this approach, in topological space $X$
compute the following sets:
\begin{itemize}
\item the collection of all $h(G,X)$ such that $G$ is a finitary permutation subgroup of $Homeo(X)$, i.e., $G$ is a subgroup
	of $Homeo(X)$ and for all $g\in G$ the set $\{x\in X: gx\neq x\}$ is finite,
\item the collection of all $h(G,X)$ such that $G$ is a cofinitary permutation subgroup of $Homeo(X)$, i.e., $G$ is a subgroup
	of $Homeo(X)$ and for all $g\in G\setminus\{id_X\}$ the set $\{x\in X: gx=x\}$ is finite.
\end{itemize}

\noindent
{\small  
{\bf Fatemah Ayatollah Zadeh Shirazi},
Faculty of Mathematics, Statistics and Computer Science,
College of Science, University of Tehran,
Enghelab Ave., Tehran, Iran
\linebreak
({\it e-mail}: f.a.z.shirazi@ut.ac.ir  ,  fatemah@khayam.ut.ac.ir)
\vspace{3mm}
\\
{\bf Arezoo Hosseini},
Faculty of Mathematics, College of Science, Farhangian University, Pardis Nasibe--shahid sherafat, Enghelab Ave., Tehran, Iran
({\it e-mail}: a.hosseini@cfu.ac.ir)
\vspace{3mm}
\\
{\bf Zahra Nili Ahmadabadi},
Science and Research Branch, Islamic Azad University, 
Tehran, Iran 
({\it e-mail}: zahra.nili.a@gmail.com)}

\end{document}